\newfont{\bb}{msbm10 at 11pt}
\newfont{\bbsmall}{msbm8 at 8pt}
\def\rth{\mathbb{R}^3}
\def\R{\mathbb{R}}
\def\N{\mathbb{N}}
\def\Z{\mathbb{Z}}
\def\Hip{\mathbb{H}}
\def\D{\mathbb{D}}
\def\esf{\mathbb{S}}
\def\Te{\mathbb{T}}
\newcommand{\T}{\mathbb{T}}
\newcommand{\ben}{\begin{enumerate}}
\newcommand{\bit}{\begin{itemize}}
\newcommand{\een}{\end{enumerate}}
\newcommand{\eit}{\end{itemize}}
\newcommand{\wh}{\widehat}
\newcommand{\Int}{\mbox{\rm Int}}
\newcommand{\wt}{\widetilde}
\renewcommand{\S}{\Sigma}
\newcommand{\ed}{\end{document}}
\def\a{{\alpha}}
\def\t{{\theta}}
\def\g{{\gamma}}
\def\G{{\Gamma}}
\def\l{{\lambda}}
\def\de{{\delta}}
\def\be{{\beta}}
\def\ve{{\varepsilon}}
\def\cF{{\cal F}}
\def\cC{{\cal C}}
\def\cR{{\cal R}}
\def\cL{{\cal L}}
\newcommand{\ov}{\overline}
\def\centerbmp#1#2#3{\vskip#2\relax\centerline{\hbox to#1{\special
    {bmp:#3 x=#1, y=#2}\hfil}}}
\newtheorem{theorem}{Theorem}[section]
\newtheorem{lemma}[theorem]{Lemma}
\newtheorem{remark}[theorem]{Remark}
\newtheorem{corollary}[theorem]{Corollary}
\newtheorem{definition}[theorem]{Definition}
\newtheorem{assertion}[theorem]{Assertion}
\newenvironment{proof}{\smallskip\noindent{\it Proof.}\hskip \labelsep}
{\hfill\penalty10000\raisebox{-.09em}{$\Box$}\par\medskip}
\begin{document}
\begin{title}
{CMC foliations of closed manifolds}
\end{title}

\begin{author}
{William H. Meeks III\thanks{This material is based upon
   work for the NSF under Award No. DMS - 1309236.
   Any opinions, findings, and conclusions or recommendations
   expressed in this publication are those of the authors and do not
   necessarily reflect the views of the NSF.}
   \and Joaqu\'\i n P\' erez
\thanks{The second  author was supported in part
by MEC/FEDER grant no. MTM2011-22547, and
Regional J. Andaluc\'\i a grant no. P06-FQM-01642.}
}
\end{author} \maketitle

\begin{abstract}
We prove that every closed, smooth   $n$-manifold $X$ 
admits  a Riemannian metric together with a
CMC foliation if and only if its Euler characteristic is zero,
where by a CMC foliation we mean a  smooth, codimension-one, transversely
oriented foliation with
leaves  of constant mean curvature and where  the value of the constant
mean curvature can vary from leaf to leaf. Furthermore, we prove
that this CMC foliation of $X$ can be chosen so that when $n\geq 2$,
the constant values of the mean curvatures of its leaves change sign.
We also prove a general structure theorem for any such
non-minimal  CMC foliation of  $X$  that describes relationships between
the geometry and topology of the leaves, including the property that
there exist compact leaves for every attained value of the mean curvature.

\vspace{.3cm}

\noindent{\it Mathematics Subject Classification:} Primary 53A10
   Secondary 49Q05, 53C42, 57R30

\noindent{\it Key words and phrases:} constant mean curvature hypersurface,
foliation, turbularization.
\end{abstract}

\section{Introduction.}
This manuscript studies  the existence, geometry and topology of smooth, transversely oriented
foliations $\cF$ of a smooth closed   Riemannian $n$-manifold $X$ (not necessarily orientable), such that
all of the leaves of $\cF$ are two-sided hypersurfaces of constant mean curvature and where the value of
the constant mean curvature can vary from leaf to leaf; here, that the foliation
is transversely oriented just means that there exists a smooth,  unit vector field $N$ on $X$
that is normal to the leaves of the foliation, and the convention of signs for the mean curvature $H$
of a leaf $L$ of $\cF$ is
\begin{equation}
  \label{eq:H}
  (n-1)H=\sum _{i=1}^{n-1}\langle \nabla _{E_i}E_i,N\rangle ,
\end{equation}
where $\langle ,\rangle $ is the ambient metric on $X$, $\nabla $
is the Riemannian connection for the induced
metric on $L$, and $\{ E_1,\ldots ,E_{n-1}\} $ is a local orthonormal frame for the tangent bundle of $L$.
Such a foliation $\cF $ is called a
{\em CMC foliation} of $X$; this is a particular case of a {\it tense} foliation, defined as a
smooth foliation of $X$ by $k$-dimensional submanifolds with parallel mean
curvature vector, see e.g., Definition 1.36 in
Rovenskii~\cite{rov1}. All manifolds and foliations appearing here will be assumed to be smooth
(of class $C^{\infty }$) unless otherwise stated.

By the next  theorem, the vanishing of the Euler characteristic of a  closed
$n$-manifold is equivalent to  the existence of a CMC foliation of the manifold with respect
 to some Riemannian metric.  In the case the $n$-manifold is orientable, this theorem
was proved by Oshikiri~\cite{osh3}; we emphasize that although in our presentation we give an
alternative proof of the orientable case that uses a previous result by Oshikiri (Theorem~\ref{thm2.4} below),
this approach is in fact not necessary, as we give another proof of  Theorem~\ref{main}
which does not use Oshikiri's results and which also covers the non-orientable case.
Furthermore, when $n\geq 3$ the CMC foliations $\cF$ that we construct
on a given closed $n$-manifold $X$ with vanishing Euler characteristic
satisfy that there are a finite number of components of the complement of the sublamination of minimal leaves
in $\cF$ such that each of these
foliated components is diffeomorphic to the product of an open $(n-1)$-disk $D$
and a circle $\esf^1$, with isometry group containing $SO(n-1)\times \esf^1$;
furthermore, the universal cover $D\times \R$ of each such component together with its
lifted foliation and metric are equivalent to a rather explicit CMC
foliation $\cF_n$ on  $D\times \R$ with a product metric $g_n$, such that this structure is
invariant under the action of $SO(n-1)\times \R$ and depends only on the dimension $n$.

Note that by definition, a CMC foliation is necessarily smooth.

\begin{theorem}[Existence Theorem for CMC Foliations] \label{main}
A closed   $n$-manifold admits a CMC foliation
for some Riemannian metric 
if and only if its Euler characteristic is zero. When $n\geq 2$,
the CMC foliation can be taken to be non-minimal.
\end{theorem}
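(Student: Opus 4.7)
The necessity direction is immediate and uses no CMC hypothesis: given any smooth, transversely oriented, codimension-one foliation $\cF$ on $X$, the defining unit normal $N$ is a nowhere-zero smooth vector field, so the Poincaré--Hopf theorem forces $\chi(X)=0$. The content lies entirely in the converse.

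Assuming $\chi(X)=0$, my plan is first to produce any smooth, transversely oriented, codimension-one foliation $\cF_0$ on $X$ by purely topological means. For $n\geq 3$ this comes from Thurston's foliation existence theorem (with Lickorish's result for $n=3$), while for $n=2$ the hypothesis forces $X$ to be a torus or Klein bottle, both of which carry obvious codimension-one foliations. If $\cF_0$ fails to be transversely oriented, I would lift everything to the two-fold cover along which the normal line bundle trivializes, perform the remaining CMC construction equivariantly, and descend. To upgrade $\cF_0$ to a CMC foliation, in the orientable case I would apply the theorem of Oshikiri cited as Theorem~\ref{thm2.4} above, which supplies a Riemannian metric realizing $\cF_0$ as a CMC foliation. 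For the non-orientable case, since Oshikiri does not apply directly, I would construct the metric by hand: cover $X$ by foliation flow-boxes, impose locally a product metric $ds^2+h_s$ in which each plaque is a minimal level set of the CMC defining function $s$, and glue with a partition of unity whose plateaus are arranged along leaves so that leafwise mean curvature is preserved.

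Finally, to ensure the CMC foliation can be chosen non-minimal when $n\geq 2$, I would perform a \emph{turbularization} along a closed transversal $\gamma$ to a leaf of the CMC foliation produced above. Concretely, excise a tubular neighborhood $U\cong D\times \esf^1$ of $\gamma$ and glue in the explicit model $(D\times \esf^1, g_n, \cF_n/\Z)$ arising as the $\Z$-quotient of the rotationally symmetric CMC foliation $(\cF_n, g_n)$ on $D\times \R$ described in the introduction, whose leaves have constant mean curvatures that sweep through both signs and asymptote to a flat cylindrical minimal leaf at $\partial D\times \R$. The main obstacle I anticipate is the construction and smooth gluing of $(\cF_n,g_n)$: one must exhibit a warped product metric $g_n$ on $D\times \R$ and a one-parameter family of rotationally symmetric graphs over $D$ whose vertical translates foliate $D\times \R$, each with constant mean curvature, and which become a Riemannian product near $\partial D\times \R$ so that the gluing to the ambient metric is smooth. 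I would address this by reducing the CMC condition for such rotationally symmetric graphs to an explicit ODE involving the warping function of $g_n$, then tailoring the warping function so that the translates sweep through all desired mean curvature values while remaining bounded near $\partial D$, thereby producing the required non-minimal CMC structure with the advertised $SO(n-1)\times\esf^1$ symmetry.
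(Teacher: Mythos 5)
Your necessity argument and the use of Thurston's theorem for existence are correct and match the paper. But there are two fundamental gaps in your sufficiency direction.

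First, you cannot apply Oshikiri's Theorem~\ref{thm2.4} ``off the shelf'' to an arbitrary transversely oriented foliation $\cF_0$. That theorem tells you \emph{which} functions $f$ are admissible, given $\cF_0$; it does not hand you a CMC metric unless you can exhibit a function $f$ that is constant along leaves and meets the $(\pm)$-fcd sign conditions. For a generic $\cF_0$ coming from Thurston's theorem this is nontrivial: if $\cF_0$ has a compact leaf $L$ that bounds a minimal $(+)$-fcd on one side and a $(-)$-fcd on the other, any leafwise-constant $f$ would need to be both $>0$ and $<0$ along $L$, which is impossible; conversely if $\cF_0$ has no fcd's at all, $f=0$ is the natural admissible choice and you get a minimal (not a non-minimal) foliation. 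The entire content of Sections~\ref{sec4.1}--\ref{ConstructionF} is a \emph{topological pre-processing step}: one uses Haefliger's compactness of the set of compact leaves to find the finite transversal system $\Delta=\{\g_1,\dots,\g_k\}$ of Lemma~\ref{lemma4.1}, and one turbularizes along pairs of closed transversals (positioned inside normal fences exploiting the non-trivial holonomy of $L(p(\g))$ and $L(q(\g))$) so that \emph{after} the modification the only minimal $(\pm)$-fcd's are the inserted Reeb components, where $f$ can be prescribed with the correct sign. Your plan skips this step entirely, and your subsequent plan to ``make it non-minimal afterwards by turbularization'' inverts the correct order of operations: once you have a CMC metric, excising a solid torus and regluing destroys the CMC property of the untouched leaves passing near the surgery region, because the ambient metric there has been altered.

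Second, the partition-of-unity construction you propose for the non-orientable case cannot work: mean curvature is a second-order, non-linear functional of the metric, and a convex combination of metrics with leafwise $H\equiv\text{const}$ need not have leafwise constant $H$. The paper's actual route is to cut out the enlarged Reeb components and reglue the two resulting boundary spheres-times-circles to form a closed manifold $\widehat{X}$ on which the induced foliation is homologically taut, then invoke a generalization of Sullivan's theorem to non-orientable codimension-one foliations (Theorem~\ref{thm-sul}) to obtain a \emph{minimal} metric on $\widehat{X}$, and finally use Moser's theorem on volume forms to match that metric to the rotationally symmetric metric on the enlarged Reeb components along a collar. Both Sullivan's theorem (via the closed $(n-1)$-form and Rummler's calculation) and Moser's normalization are essential; a naive local gluing has no mechanism to enforce the global mean curvature constraint.
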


Since closed (topological) three-manifolds admit smooth structures and
the Euler characteristic of any closed manifold
of odd dimension is zero, the 
previous theorem has the following corollary.

\begin{corollary} \label{cor1.2}
Every closed topological three-manifold 
admits a smooth structure together with
a  Riemannian metric and a  non-minimal CMC
foliation.
\end{corollary}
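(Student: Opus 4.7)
The plan is to derive Corollary~\ref{cor1.2} directly from Theorem~\ref{main} together with two classical facts about closed three-manifolds. Specifically, I would organize the argument in three short steps.

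First, I would invoke a smoothing result to pass from the topological to the smooth category. Every closed topological three-manifold admits a (unique up to diffeomorphism) smooth structure by the classical Moise triangulation theorem combined with Whitehead's result that PL and smooth structures agree in dimension three; equivalently, one may cite Bing's/Moise's theorem together with Munkres' smoothing theorem. This step is the reason the hypothesis is only topological in the statement.

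Second, I would verify the Euler characteristic hypothesis of Theorem~\ref{main}. Let $X$ be a closed topological three-manifold, now equipped with its smooth structure from the previous step. Poincar\'e--Lefschetz duality, applied with $\Z/2$-coefficients so that non-orientability is not an obstruction, gives $\dim_{\Z/2} H_i(X;\Z/2)=\dim_{\Z/2} H_{3-i}(X;\Z/2)$ for $i=0,1,2,3$. Hence
\[
\chi(X)=\sum_{i=0}^{3}(-1)^i\dim_{\Z/2} H_i(X;\Z/2)=0,
\]
since the pairing $i\leftrightarrow 3-i$ swaps even and odd dimensions. The same conclusion holds in any odd dimension, as asserted in the paragraph preceding the corollary.

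Third, I would simply apply Theorem~\ref{main} in dimension $n=3\geq 2$: since $\chi(X)=0$, there exists a Riemannian metric on $X$ together with a CMC foliation $\cF$, and the ``$n\geq 2$'' clause of Theorem~\ref{main} allows us to take $\cF$ to be non-minimal. This gives the desired structure on $X$ and completes the deduction. No step here is a genuine obstacle, since the smoothing of topological three-manifolds and the vanishing of $\chi$ in odd dimensions are classical, and all of the geometric work has been carried out in Theorem~\ref{main}; the only substantive point to flag clearly is that Poincar\'e duality must be invoked with $\Z/2$-coefficients to accommodate the possibly non-orientable case.
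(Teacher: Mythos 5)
Your proposal matches the paper's reasoning exactly: the paper justifies the corollary in one sentence by citing smoothability of closed topological three-manifolds and vanishing of the Euler characteristic in odd dimensions, then applies Theorem~\ref{main}. You have simply supplied the standard references (Moise/Bing plus Whitehead/Munkres for smoothing, Poincar\'e duality with $\Z/2$ coefficients for $\chi=0$), which is a correct expansion of the same argument.
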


Our proof of Theorem~\ref{main} is motivated by two seminal works. The first one, due to Thurston
(Theorem 1(a) in~\cite{th3}), shows
that a necessary and sufficient condition for a smooth closed   $n$-manifold $X$ to admit a
smooth, codimension-one foliation $\cF$ is for its Euler characteristic to vanish;
for our applications, we will need to check that $\cF$ can be chosen to be  transversely oriented.
The second one is the result by Sullivan (Corollary~3 in~\cite{sul1}) that given such a pair
$(X,\cF)$ where $\cF$ is orientable (see Definition~\ref{def2.3}),
then $X$ admits a smooth Riemannian metric
$g_X$ for which $\cF$ is a minimal foliation (i.e., $\cF$ is {\it geometrically taut}) if and only if
for every compact leaf $L$ of $\cF$ there exists a closed transversal that intersects $L$
(i.e., $\cF$ is {\it homologically taut}); for our applications, we will need to prove that
in the implication `homologically taut $\Rightarrow $ geometrically taut',
Sullivan's hypothesis that the foliation $\cF$ be orientable can be removed.

Theorem~\ref{main} is also related to the question of prescribing a mean curvature function for a given
transversely oriented, codimension-one foliation $\cF$ of a closed $n$-manifold~$X$: Walczak~\cite{walc1} asked
the question of which smooth functions $f$ on $X$ can be written as mean curvature functions of the leaves of
$\cF$ with respect to some Riemannian metric on $X$. In this line, Oshikiri~\cite{osh1,osh2}
characterized the solutions to this problem when $X$ is orientable; in particular, under the hypothesis
that $X$ is orientable, he described
for which functions $f\in C^{\infty }(X)$ that are constant
along the leaves of $\cF$, there exists a Riemannian metric on $X$ that makes $\cF$ a CMC foliation
(see Theorem~\ref{thm2.4} below for a precise statement of Oshikiri's result in the general setting).

An application of the divergence theorem shows that a geometrically taut foliation of a
closed   three-manifold $X$ cannot have a Reeb component\footnote{Classical Reeb components
of foliations of closed three-manifolds are defined
at the beginning of Section~\ref{sec3}. In Definition~\ref{reeb} we will describe the notion of
{\it enlarged Reeb component} of a codimension-one foliation, which makes sense in all dimensions
and differs for the classical definition in the case $n=3$ as it does not contain a single compact leaf.},
since such a foliation can clearly not have a compact leaf that separates the manifold.
Therefore, Novikov's theorem~\cite{nov1}, which implies that any foliation of the three-sphere
admits a Reeb component, also implies that the three-sphere does not admit any geometrically taut foliations.
By work of Novikov~\cite{nov1} and Rosenberg~\cite{rose5}
(also see Corollary~9.1.9 in Candel and Conlon~\cite{caco2}),
every closed  orientable three-manifold $X$ admitting
a codimension-one, transversely oriented foliation without Reeb components is either $\esf^2\times \esf^1$ or
irreducible\footnote{A three-manifold $X$ is {\it irreducible} if every embedded two-sphere
in $X$ bounds a three-ball in $X$.}; in particular, such an $X$ is a prime three-manifold.
This fact together with the divergence theorem imply that a closed non-prime three-manifold $X$
does not admit any geometrically taut foliations
%
(i.e., such a manifold does not  admit any foliations that are minimal for some Riemannian ambient metric);
however, every closed   three-manifold  admits a Riemannian metric
together with a CMC foliation by Corollary~\ref{cor1.2}.

We now explain
the organization of the paper. In Section~\ref{sec2} we cover some of the basic
definitions related to CMC foliations.
In Section~\ref{secnew} we will prove Theorem~\ref{main} in the
case $n\leq 2$, so we will assume $n\geq 3$ for the remainder of the paper.
In Section~\ref{sec3} we study the existence of
codimension-one,  $(SO(n-1)\times \R)$-invariant CMC
foliations $\cR_{n-1} $ of the Riemannian product of the
 real number line $\R$ with the closed unit $(n-1)$-disk $\ov{\D}(1)\subset \R^{n-1}$
 with respect to a certain $SO(n-1)$-invariant metric, whose leaves are of one of two types:
the leaves that intersect $\D(r_1)\times \R $ (here $\D(r_1)=\{ x\in \R ^{n-1} \ | \ \|x\| <r_1\} $ and
$0<r_1<1)$ are rotationally symmetric
hypersurfaces which are graphical over $\D (r_1)\times \{ 0\} $ and asymptotic to the vertical
$(n-1)$-cylinder $\esf ^{n-2}(r_1)\times \R $; the remaining leaves of $\cR_{n-1} $ are the
vertical cylinders $\esf ^{n-2}(r)\times \R $, $r\in [r_1,1]$. All leaves of $\cR_{n-1} $
in $\D^{n-1}(r_1)\times \R $
are vertical translates of a single such leaf (in particular, they all have the same constant mean
curvature, equal to the constant value of the mean curvature of $\esf ^{n-2}(r_1)\times \R $),
while the (constant) mean curvature values of the cylinders $\esf ^{n-2}(r)\times \R $, $r\in [r_1,1]$,
vary from leaf to leaf. These foliations $\cR_{n-1} $ give rise under the quotient action
of $\Z\subset \R$ to what we call {\it enlarged foliated Reeb components} $\cR_{n-1}/\Z$,
that are diffeomorphic to $\overline{\D}(1)\times \esf^1$.

Section~\ref{sec:main} will be devoted to proving Theorem~\ref{main} along the following lines.
The sufficient implication follows directly from the divergence theorem
and the Poincar\'e-Hopf index theorem. As for the necessary
implication, the results in~\cite{th3} imply that
a smooth, closed   $n$-manifold $X$ with Euler characteristic zero admits a smooth,
transversely oriented foliation $\cF'$ of codimension one.
After a simple modification of $\cF'$ by the classical technique of turbularization
(explained in Subsection~\ref{sec4.1}), $\cF'$
can be assumed to have at least one non-compact leaf.
In Section~\ref{ConstructionF} we will prove the existence of a finite collection
$\Delta=\{\g_1,\ldots,\g_k\}$ of pairwise
disjoint, compact embedded arcs in $X$ that are transverse to the leaves of $\cF'$
and such that every  compact leaf of the foliation intersects at least one of
these arcs; this existence result will follow from work of
Haefliger~\cite{hae2} on the compactness of the set of compact leaves
of any codimension-one foliation of~$X$.
We will then proceed to modify $\cF'$ using again turbularization by introducing pairs of
what we called in the previous paragraph
``enlarged Reeb components'', one pair of these enlarged Reeb components for each $\g_i\in \Delta$.
These modifications give rise to a new transversely oriented foliation $\cF$ and a
related function $f\in C^{\infty }(X)$ that is constant
along the leaves of $\cF$ and that when $X$ is orientable, satisfies  Oshikiri's condition~\cite{osh1,osh2}
(see also Theorem~\ref{thm2.4} below) for
there to exist a Riemannian metric on $X$ that makes $\cF$ a CMC foliation.
Since the function $f$ also changes sign on $X$,
Theorem~\ref{thm2.4} will produce the ambient metric 
on $X$ such that the foliation $\cF$ satisfies the
properties stated in Theorem~\ref{main} when $X$ is orientable. This analysis of the
orientable case for $X$ will be done in Section~\ref{subsecXorient}.
In  Section~\ref{subsec4.3} we will
give a direct proof of the necessary
implication of Theorem~\ref{main} that
avoids the results of Oshikiri
and also works when the manifold $X$ is  non-orientable;
this direct proof depends on the rotationally invariant foliations
constructed in Section~\ref{sec3}, Theorem~2 in Moser~\cite{moser1},
as well as on a generalization
of Sullivan's theorem to the case of non-orientable codimension-one
foliations, given in Theorem~\ref{thm-sul} of Section~\ref{sec4.5}.

Finally, in Section~\ref{sec5} we will prove the Structure Theorem~\ref{main2}
given below on the geometry and topology of non-minimal CMC foliations of
a closed   $n$-manifold. Before stating this theorem, we fix some notation.
For a CMC foliation $\cF$ of a (connected) closed  Riemannian
$n$-manifold $X$:
\bit
\item $N_\cF$ denotes the unit normal vector field to $\cF$ whose direction coincides
with the given transverse orientation.
\item  $H_{\cF}\colon X\to \R$ stands for the
{\em mean curvature function} of $\cF$ with respect to $N_{\cF}$.
 \item $H_\cF (X)=[\min H_{\cF},\max H_{\cF}]$ is the image of $H_\cF$.
 \item $\cC_\cF$ denotes the union of the compact leaves in $\cF$, which is a compact
 subset of $X$ by the aforementioned result of Haefliger~\cite{hae2}.
 \eit

\begin{theorem}[Structure Theorem for CMC Foliations] \label{main2}
Let $(X,g)$ be
a closed connected Riemannian $n$-manifold
which admits a non-minimal CMC foliation $\cF$. Then:
\ben
\item  \label{it1}  $\int_X H_{\cF} \,dV=0$ and so,
$H_{\cF}$ changes sign (here $dV$ denotes the volume element with respect to $g$).

\item  \label{it2} For $H$ a regular value of $H_\cF$,  $H_{\cF}^{-1}(H)$ consists
of a finite number of compact leaves of $\cF$ contained in $\Int(\cC_\cF)$.

\item  \label{it3} $X-\cC_\cF$ consists of a countable number of open
components and the leaves in each such component $\Delta $ have the same mean curvature
as the finite positive number of compact leaves in $\partial \Delta $;
furthermore, every leaf in the closure of $X-\cC_\cF$ is stable.
In particular, except for a countable subset of $H_\cF(X)$, every leaf of $\cF$
with mean curvature $H$ is compact, and for every $H\in H_\cF(X)$,
there exists at least one compact leaf of $\cF$ with mean curvature $H$.

\item \label{it4}
\ben[a.] \item Suppose that $L$ is a leaf of $\cF$ that contains
a regular point of $H_\cF$. Then $L$ is compact, it consists entirely of regular points of $H_\cF$ and lies
in the interior of $ \cC_\cF$. Furthermore, $L$ has 
index\footnote{This index (resp. nullity) is the number of
negative eigenvalues (resp. multiplicity of zero as an eigenvalue) of the
Jacobi operator of $L$ viewed as a compact, two-sided hypersurface with constant mean curvature in $X$.}
zero if and only if the function $g( \nabla H_\cF, N_\cF)=N_{\cF}(H_{\cF})$ is
negative along $L$, and if the index of  $L$
is zero, then it also has nullity zero.

\item  Suppose that $L$ is a leaf of $\cF$ that is disjoint from the regular points
of $H_\cF$. Then the index of $L$ is zero, and if $L$ is a limit
leaf\,\footnote{See Definition~\ref{def-limset} for
the definition of a limit leaf of a CMC lamination.} of
the CMC lamination of $X$ consisting of the compact leaves of $\cF$,
then $L$ is compact with nullity one.
\een
\item \label{it5} Any leaf of $\cF$ with mean curvature equal to $\min H_\cF$ or $\max H_\cF$
is stable and such a leaf can  be chosen to be compact with nullity one.
\een
\end{theorem}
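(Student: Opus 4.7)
The plan is to prove items (1)--(5) in order. The main ingredients will be: the divergence theorem and equation~(\ref{eq:H}) for~(1); Sard's theorem together with the fact that $\nabla H_\cF$ is everywhere normal to $\cF$ for~(2) and~(3); Haefliger's compactness of $\cC_\cF$ and the stability of limit leaves of CMC laminations for the stability assertions in~(3)--(5); and the first-variation formula for mean curvature, combined with a Perron principal-eigenvalue argument, for~(4) and~(5). The hard part will be the ``only if'' direction in~(4)(a): passing from the spectral hypothesis of index zero to the pointwise sign statement on $N_\cF(H_\cF)$, which requires using that the constant function $1$ is positive and serves, via the foliation structure, as a natural candidate principal eigenfunction of the Jacobi operator.

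For~(1), in a local orthonormal frame $\{E_1,\ldots,E_{n-1},N_\cF\}$ a direct computation using equation~(\ref{eq:H}) gives
\[
\Div(N_\cF)=\sum_{i=1}^{n-1}\langle\nabla_{E_i}N_\cF,E_i\rangle=-\sum_{i=1}^{n-1}\langle N_\cF,\nabla_{E_i}E_i\rangle=-(n-1)H_\cF;
\]
integrating on closed $X$ and applying the divergence theorem yields $\int_X H_\cF\,dV=0$, and non-minimality forces $H_\cF$ to change sign. Constancy of $H_\cF$ along leaves gives $\nabla H_\cF=N_\cF(H_\cF)\,N_\cF$, so the regular set $\Omega$ of $H_\cF$ is $\cF$-saturated; a foliation-chart computation then shows that on each connected leaf, $N_\cF(H_\cF)$ is either identically zero or nowhere zero. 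For~(2), if $H$ is a regular value, $H_\cF^{-1}(H)$ is a compact smooth hypersurface and a finite union of compact leaves, each sitting in a product tubular neighborhood whose leaves are all compact. For~(3), given a component $\Delta$ of $X\setminus\cC_\cF$, non-constancy of $H_\cF$ on $\Delta$ would yield by Sard's theorem a regular value in the interval $H_\cF(\Delta)$ whose preimage would meet $\Delta$ in a compact leaf by~(2), contradicting $\Delta\subset X\setminus\cC_\cF$; hence $H_\cF\equiv H_\Delta$ on $\Delta$, and continuity matches $H_\Delta$ with the mean curvature of each compact leaf in $\partial\Delta$, whose finiteness follows from the local foliation picture together with Haefliger's compactness of $\cC_\cF$. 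Since the components of $X\setminus \cC_\cF$ are at most countable, the collection $\{H_\Delta\}$ is countable, and each value of $H_\cF$ is attained by a compact leaf (via~(2) for regular values, or via a boundary leaf of a component $\Delta$ for critical values in the countable set). Every leaf in $\overline{X\setminus \cC_\cF}$ is a limit leaf of $\cF$---the non-compact leaves must accumulate in the compact manifold $X$, while the leaves in $\partial\Delta$ are one-sided limits from $\Delta$---and so is stable by the Meeks--P\'erez--Ros stability theorem.

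For~(4), the underlying identity is the first-variation formula: for a normal variation of a leaf $L$ with speed function $u$,
\[
(n-1)\dot H=\Delta_L u+(|A|^2+\Ric(N_\cF,N_\cF))\,u,
\]
which specializes for the foliation variation $u\equiv 1$ to $J_L(1)=-(n-1)\,N_\cF(H_\cF)$ on $L$, where $J_L=-\Delta_L-|A|^2-\Ric(N_\cF,N_\cF)$ is the Jacobi operator in the convention making stability equivalent to $J_L\geq 0$. In case~(a), the dichotomy in~(2) forces $L$ to consist entirely of regular points; combined with~(3), this puts $L$ in $\Omega\subset\Int(\cC_\cF)$ (since on each component $\Delta$ of $X\setminus\cC_\cF$ one has $\nabla H_\cF\equiv 0$), and in particular $L$ is compact. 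Continuity and connectedness then give $N_\cF(H_\cF)$, hence $J_L(1)$, a constant nonzero sign on $L$; a Perron principal-eigenvalue argument applied to the positive test function $1$ (using self-adjointness of $J_L$) then identifies the sign of $\lambda_1(J_L)$ with the sign of $J_L(1)$, giving the chain $N_\cF(H_\cF)<0$ on $L$ $\Leftrightarrow$ $J_L(1)>0$ on $L$ $\Leftrightarrow$ $\lambda_1(J_L)>0$ $\Leftrightarrow$ $L$ has index zero (and automatically nullity zero). In case~(b), the hypothesis forces $N_\cF(H_\cF)\equiv 0$ and hence $J_L(1)\equiv 0$, so $1$ is a positive Jacobi field; the Perron argument then gives $\lambda_1(J_L)=0$ with $1$ as principal eigenfunction, so the index is zero and the nullity at least one. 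If $L$ is additionally a limit leaf of $\cC_\cF$, the Meeks--P\'erez--Ros structure theorem for limit leaves of CMC laminations provides a one-dimensional Jacobi kernel, forcing nullity exactly one.

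Finally, for~(5), every leaf $L$ with $H_\cF(L)=\min H_\cF$ attains the global minimum of $H_\cF\colon X\to\R$ at each of its points, so $\nabla H_\cF\equiv 0$ on $L$; hence $L$ is disjoint from $\Omega$ and~(4)(b) delivers stability. To produce a compact example with nullity one, I would select a sequence $(L_k)$ of compact leaves in $\cC_\cF$ with $H_\cF(L_k)\to\min H_\cF$, guaranteed by~(3); Haefliger's compactness of $\cC_\cF$ yields a subsequence converging in the Hausdorff topology to a compact leaf $L_\infty$ of mean curvature $\min H_\cF$ that is a limit leaf of $\cC_\cF$, and~(4)(b) forces nullity exactly one. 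The argument for $\max H_\cF$ is symmetric.
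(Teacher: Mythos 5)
Your approach tracks the paper's closely on items (1), (2), (4)(a) and (5): divergence theorem for (1), Sard plus the implicit function theorem for (2), and the first-variation identity $J_L(1)=-(n-1)\,N_\cF(H_\cF)$ together with a Perron principal-eigenvalue argument (noting that $N_\cF(H_\cF)$ has constant sign along a leaf) for (4) and (5). There is, however, a genuine gap in the stability part of item~(3).

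You assert that ``every leaf in $\overline{X-\cC_\cF}$ is a limit leaf of $\cF$ \dots and so is stable by the Meeks--P\'erez--Ros stability theorem.'' That theorem applies to limit leaves of an \emph{$H$-lamination}, i.e.\ a lamination all of whose leaves share the \emph{same} mean curvature $H$; it says nothing about a limit leaf of a CMC lamination whose nearby leaves carry distinct mean curvatures. For a leaf $L$ lying in $\overline{\De}$ for a single component $\De$ of $X-\cC_\cF$ your argument works, since $\overline{\De}$ is an $H_\De$-lamination. But $\overline{X-\cC_\cF}$ may contain compact leaves $L$ belonging to $\overline{\De}$ for \emph{no} single component $\De$: for example, $L$ may be a limit of leaves $L_n\subset\partial\De_n$ coming from infinitely many distinct components $\De_n$ whose constants $H_{\De_n}$ are pairwise distinct. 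No $H$-lamination is available, and the Meeks--P\'erez--Ros theorem cannot be invoked. The paper treats this case by a separate limiting argument: by Haefliger compactness, $L$ is a smooth limit with multiplicity one of the $H_n$-stable leaves $L_n$, and any smooth unstable subdomain of $L$ would lift normally, for $n$ large, to an unstable subdomain of $L_n$, contradicting the stability of $L_n$. You need an argument of this type to close the gap. Two smaller points: in (4)(b) your Perron argument presupposes $L$ compact, which is not automatic for a leaf disjoint from the regular points of $H_\cF$ (a non-compact such $L$ lies in $\overline{X-\cC_\cF}$ and should be dispatched by item~(3)); and the appeal to a ``Meeks--P\'erez--Ros structure theorem \dots providing a one-dimensional Jacobi kernel'' is both unnecessary and not quite the right citation --- once $L$ is compact with the positive Jacobi field $1$, the simplicity of the principal eigenvalue already forces nullity exactly one, which is how the paper argues.
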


\begin{remark}
\label{rem1}
{\em
Let $(X,g)$ and $\cF$ be as in Theorem~\ref{main2}.
\begin{enumerate}[(i)]
\item Item~\ref{it5} in Theorem~\ref{main2} implies restrictions
 on the Ricci curvature of $(X,g)$; namely,
 Ric$(N_\cF)\leq -(n-1)\max H_\cF^2$ at some point in $X$, which can
 be obtained from evaluating the index form of the Jacobi operator of a compact (stable) leaf
 with mean curvature equal to either $-\min H_{\cF}$, or
 $\max H_{\cF}$, at the constant function one on the leaf.

\item In the case that $n=3$, $X$ is orientable and $\cF$ is not topologically a
product foliation  of $X=\esf^2\times \esf^1$ by spheres, then  item 1 in Theorem~2.13 of~\cite{mpr19} and
item~\ref{it5} in Theorem~\ref{main2}
imply that the  scalar curvature of $(X,g)$ cannot be everywhere greater than
$-\frac23 \max H^2_{\cF}$.

\end{enumerate}
}
\end{remark}

{\sc Acknowledgements.} The second author would like to thank Prof. Jes\'us A. Alvarez L\'opez
for valuable conversations about Oshikiri's results.

\section{Preliminaries.} \label{sec2}
\begin{definition}
\label{deflamination}
{\rm
 A smooth codimension-one {\it
lamination} of a Riemannian $n$-manifold $X$ is the union of a
collection of pairwise disjoint, connected, injectively immersed
hypersurfaces, with a certain local product structure. More precisely, it
is a pair $({\mathcal L},{\mathcal A})$ satisfying:
\begin{enumerate}
\item ${\mathcal L}$ is a closed subset of $X$;
\item ${\mathcal A}=\{ \varphi _{\be }\colon \D \times (0,1)\to
U_{\be }\} _{\be }$ is a maximal\footnote{Here maximality refers to atlases satisfying properties 2 and 3.}
atlas of (smooth) coordinate charts of $X$ (here
$\D $ is the open unit disk in $\R^{n-1}$, $(0,1)$ is the open unit
interval in $\R$ and $U_{\be }$ is an open subset of $X$). Charts in ${\mathcal A}$ are called
{\it lamination charts.}
\item For each $\be $, there exists a closed subset $C_{\be }$ of
$(0,1)$ such that $\varphi _{\be }^{-1}(U_{\be }\cap {\mathcal L})=\D \times
C_{\be}$.
\end{enumerate}

We will simply denote laminations by ${\mathcal L}$, omitting the
lamination charts $\varphi _{\be }$ unless explicitly necessary.
A smooth lamination ${\mathcal L}$ is said to be a {\it foliation of $X$} if ${\mathcal L}=X$
(and the corresponding charts in ${\mathcal A}$ are called {\it foliation charts}).
Every lamination ${\mathcal L}$  decomposes into a
collection of disjoint, connected smooth hypersurfaces (locally given by $\varphi
_{\be }(\D \times \{ t\} )$, $t\in C_{\be }$, with the notation
above), called the {\it leaves} of ${\mathcal L}$.
Note that if $\Delta
\subset {\cal L}$ is any collection of leaves of ${\cal L}$, then
the closure of the union of these leaves has the structure of a
lamination within ${\cal L}$, which we will call a {\it
sublamination.}

A smooth codimension-one lamination ${\cal L}$ of $X$ is said to be a {\it CMC lamination}
if each of its leaves has constant mean curvature (possibly varying
from leaf to leaf).
Given $H\in \R $, an {\it $H$-lamination} of $X$
is a CMC lamination all whose leaves have the same mean curvature
$H$. If $H=0$, the $H$-lamination is called a {\it minimal
lamination.} Observe that a closed hypersurface is a particular case of lamination,
hence $H$-laminations are natural generalizations of (closed) $H$-hypersurfaces.
 }
\end{definition}

\begin{definition}
\label{def-limset}
 {\rm
%
Given a smooth codimension-one lamination $\cL$ of a Riemannian $n$-manifold $X$,
a point $p\in \mathcal{L}$ is a {\it limit point} if there exists a coordinate chart
$\varphi _{\beta }\colon \D \times (0,1)\to U_{\be }$ as in Definition~\ref{deflamination}
such that $p\in U_{\be }$ and $\varphi _{\be }^{-1}(p)=(x,t)$ with
$t$ belonging to the accumulation set of $C_{\be }$.
It is easy to show that if $p$ is a limit point of
a codimension-one lamination ${\cal L}$ (resp. of a   $H$-lamination), then the leaf $L$
of ${\cal L}$ passing through $p$ consists entirely of limit points of ${\cal L}$;
in this case, $L$ is called a {\it limit leaf} of ${\cal L}$.}
\end{definition}

\begin{definition}
\label{def2.3}
{\rm
Let $\cF$ be a codimension-one foliation of a manifold $X$.
$\cF$ is called {\em transversely orientable}
 if there exists a continuous, nowhere zero vector field whose
 integral curves intersect transversely to the leaves of $\cF$. Once such a vector field has been
 chosen, we call $\cF$ a {\it transversely oriented} codimension-one foliation.
$\cF$ is called {\it orientable} if there exists a smooth $(n-1)$-form on $X$ whose restriction to the tangent
spaces of leaves of $\cF$ is never zero. These notions can be expressed
in terms of the tangent bundle $T\cF$ and the normal bundle $T^{\perp }\cF$
to $\cF$ (once we have chosen a Riemannian metric in $X$): $\cF$ is orientable
(resp.transversely orientable) when $T\cF$
(resp. $T^{\perp }\cF$) is orientable.
}
\end{definition}

\begin{definition}
{\em
Let $\cF$ be a transversely oriented, codimension-one foliation of an $n$-manifold $X$ and
let $\g$ be a smooth simple closed curve contained in a leaf $L$ of $\cF$. After picking a
Riemannian metric $g$ on $X$, we can define a  {\em normal fence}  above $\g$ as follows.
Consider exponential coordinates for $(X,g)$ along $\g $. The {\em right} normal fence is the set
\[
A=\{ \exp _{\g (t)}(sN_{\cF }(\g(t)))\ | \ t\in \esf^1, \, s\in [0,\ve ]\} ,
\]
where $N_{\cF}$ is the positive unit normal vector to $\cF $ and $\ve >0$ is sufficiently small
so that $A$ defines an embedded annulus in $X$. If the parameter $s$ moves in $[-\ve ,0]$,
then we call the annulus a {\em left} normal fence.
}
\end{definition}

We finish this section by stating Oshikiri's theorem~\cite{osh2}
 about which smooth functions on a compact oriented $n$-manifold
$X$ can be viewed as mean curvature functions of a given
transversely oriented, codimension-one foliation $\cF$
on $X$. To state this result, we first need some notation. A
domain $D\subset X$ is called {\it saturated} if it is a union of leaves of $\cF$. A compact, smooth
saturated domain $D\subset X$ is called a $(+)$-{\it foliated compact domain} ($(+)$-fcd, for short)
if the transverse orientation of $\cF$ points outward everywhere on $\partial D$,
and we call $D$ a $(-)$-{\it foliated compact domain} (or $(-)$-fcd) if the transverse
orientation of $\cF$ is inward pointing everywhere on
$\partial D$. A smooth function $f\colon X\to \R $
is called {\it admissible for $\cF$} if there exists a Riemannian metric on $X$ such that $f$
is the oppositely signed mean curvature function of the
leaves of $\cF$ with respect to the given transversal orientation.
In other words for each $x\in X$,
$-f(x)$ is the mean curvature $H$ (defined as in equation (\ref{eq:H}))
of the leaf $L_x$ of $\cF$ passing through $x$ with respect
to the unit normal vector field to $L_x$ whose
direction coincides with the given transverse orientation.

\begin{theorem}[\cite{osh2}]
\label{thm2.4}
Let  $\cF$ be a codimension-one, transversely
oriented foliation in a compact oriented $n$-manifold $X$,
such that $\cF$ contains at least one $(+)$-fcd.
Then, a smooth function $f\colon X\to \R $ is admissible for $\cF$ if and only if
every minimal\,\footnote{Here, {\it minimal} refers to the partial order given by inclusion.}
$(+)$-fcd contains a point where $f$ is positive, and every $(-)$-fcd contains a point
where $f$ is negative.
\end{theorem}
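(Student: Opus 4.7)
\smallskip
\noindent My plan treats the two implications separately.

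\textbf{(Necessity.)} Suppose $f$ is admissible via a metric $g$, so that the mean curvature of $\cF$ satisfies $H_\cF=-f$, and let $N$ be the positive unit normal. Using convention~(\ref{eq:H}) together with $\langle \nabla_{E_i}N, E_i\rangle = -\langle N, \nabla_{E_i}E_i\rangle$ (since $\langle N, E_i\rangle\equiv 0$) and $\langle \nabla_N N, N\rangle = 0$, one obtains $\Div_g(N) = -(n-1)H_\cF = (n-1)f$. For any $(+)$-fcd $D$, the outward unit normal on $\partial D$ coincides with $N$ by definition, so the divergence theorem gives
\[
(n-1)\int_D f\, dV_g \;=\; \int_{\partial D}\langle N,N\rangle\, dA_g \;=\; \mathrm{Area}_g(\partial D) > 0,
\]
forcing $f(x)>0$ at some $x\in D$; this applies to every $(+)$-fcd, and hence a fortiori to every minimal one. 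For a $(-)$-fcd, the outward normal is $-N$ and the same computation yields $\int_D f\, dV_g < 0$, so $f$ is negative somewhere in $D$.

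\textbf{(Sufficiency: reduction to a transport PDE.)} For the converse I would construct the target metric by deformation. Fix any auxiliary Riemannian metric $g_0$ on $X$, with positive unit normal $N_0$ and mean curvature $H_0$, and search inside the one-parameter family
\[
g_u \;=\; e^{2u}\, g_0|_{T\cF} \;+\; g_0|_{T^{\perp}\cF}, \qquad u\in C^{\infty}(X),
\]
obtained by conformally rescaling only the leaf directions. This deformation preserves $\cF$, keeps $N_0$ as a $g_u$-unit normal (because the normal component of $g_0$ is unchanged), and gives $dV_{g_u}=e^{(n-1)u}\, dV_{g_0}$. A direct computation of $\mathcal{L}_{N_0}(e^{(n-1)u}\, dV_{g_0})$ then yields $\Div_{g_u}(N_0) = (n-1)(N_0(u)-H_0)$, hence $H_{g_u} = H_0 - N_0(u)$. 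Imposing $H_{g_u}=-f$ reduces the whole problem to solving the first-order linear transport equation
\[
N_0(u) \;=\; H_0 + f
\]
globally on the closed manifold $X$, where one retains the freedom to replace $g_0$ by any other auxiliary metric (changing the pair $(N_0,H_0)$ accordingly, subject only to $N_0$ being a positively transverse unit vector field).

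\textbf{(Main obstacle.)} The core difficulty is the global solvability of this transport equation on the closed manifold $X$, whose characteristics are the integral curves of $N_0$. By Schwartzman's criterion for $C^\infty$ solvability, $N_0(u)=h$ admits a smooth solution iff $\int_X h\, d\mu = 0$ for every $N_0$-invariant probability measure $\mu$. The obstructions therefore live in the cone $\mathcal{M}_{N_0}$ of such invariant measures. My plan is to combine Sullivan's foliated-cycle theory with the compactness of the union of compact leaves of $\cF$ (Haefliger~\cite{hae2}) to show that every element of $\mathcal{M}_{N_0}$ decomposes into pieces concentrated on saturated subsets of $\cF$, each of which can be geometrically identified with a foliated cycle supported in a $(+)$-fcd or a $(-)$-fcd; the assumption that $\cF$ contains at least one $(+)$-fcd is precisely what is needed to prevent this cone from collapsing trivially. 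After choosing $g_0$ so that the $H_0$-contributions to $\int_X(H_0+f)\, d\mu$ can be arranged as coboundary-like terms along the closed transversals of $N_0$ and thus cancelled, the obstruction reduces exactly to the stated sign conditions: positive pairing with each minimal $(+)$-fcd and negative pairing with each $(-)$-fcd. The hardest technical step is this geometric identification of $\mathcal{M}_{N_0}$ with the fcd-supported foliated cycles, performed together with the concurrent choice of $g_0$ that makes the $H_0$-contributions tractable; the asymmetry in the hypothesis between ``minimal'' $(+)$-fcds and ``all'' $(-)$-fcds ultimately reflects the fact that the outward normal on $\partial D$ agrees with $N_0$ only on $(+)$-fcds.
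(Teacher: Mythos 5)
The paper does not prove Theorem~\ref{thm2.4}; it is cited from Oshikiri~\cite{osh2}. The closest the paper comes to the proof mechanism is in the proof of Theorem~\ref{thm:2dim}, which sketches Oshikiri's strategy: use Hahn--Banach (as in Sullivan) to produce a $1$-form $\omega$ on the $2$-manifold with $d\omega = f\,dV$ and $\omega$ nowhere zero on leaves, then build the metric from $\omega$ and a normal--tangential splitting and apply Rummler's calculation. The $n$-dimensional version is the same in spirit: one seeks a smooth $(n-1)$-form $\omega$ with $\omega|_{T\cF}>0$ and $d\omega = (n-1)f\,dV$ for a fixed volume form $dV$, from which one reads off the metric with $\ker P_\omega$ as the normal line bundle and $\omega|_{T\cF}$ as the leaf volume form.

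Your necessity argument is correct and is the standard one: $\Div_g(N_{\cF}) = -(n-1)H_\cF = (n-1)f$, and the divergence theorem on a $(+)$-fcd $D$ (where $N_\cF$ is the outward unit normal on $\partial D$) gives $(n-1)\int_D f\,dV = \mathrm{Area}(\partial D) > 0$; similarly $\int_D f\,dV < 0$ for a $(-)$-fcd. This actually establishes the stronger statement that $\int_D f > 0$ for \emph{every} $(+)$-fcd, not just minimal ones, which a fortiori gives the stated condition.

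The sufficiency half, however, has a genuine gap. First, the reduction to the transport equation $N_0(u) = H_0 + f$ built on the family $g_u = e^{2u}g_0|_{T\cF}\oplus g_0|_{T^\perp\cF}$ is a strict restriction: it corresponds to the conformal family $\omega_u = e^{(n-1)u}\,\omega_0$ of $(n-1)$-forms, whose kernel direction $\ker P_{\omega_u}$ is frozen at $\mathrm{span}(N_0)$. Oshikiri's (and Sullivan's) construction crucially exploits the freedom to choose the kernel line bundle --- equivalently, to tilt the normal direction --- because Hahn--Banach only produces \emph{some} $(n-1)$-form $\omega$ positive on $T\cF$ with the prescribed exterior derivative, and there is no reason its kernel should coincide with a pre-chosen $\mathrm{span}(N_0)$. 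You mention the freedom of replacing $g_0$, but you do not show that any choice makes the scalar transport equation solvable. Second, the asserted ``Schwartzman criterion for $C^\infty$ solvability'' is not correct: the vanishing of $\int_X h\,d\mu$ for every $N_0$-invariant probability measure $\mu$ is necessary but in general far from sufficient for a \emph{smooth} (or even continuous) solution of $N_0(u)=h$ on a compact manifold; there are typically further obstructions given by invariant distributions that are not measures (as the cohomological-equation literature on parabolic and non-hyperbolic flows makes clear). So the central identification you plan --- matching $\mathcal{M}_{N_0}$ with fcd-supported foliated cycles and deducing solvability --- is both unproved and resting on a false criterion. The Hahn--Banach route sidesteps the transport equation entirely: it separates the convex cone of foliated cycles (on which the fcd conditions encode the sign constraints) from the diffuse currents, producing $\omega$ directly, and the metric is then assembled algebraically with Rummler's calculation confirming the mean curvature. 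If you want to retain an analytic flavor, the robust path is to follow that duality argument rather than try to solve the scalar ODE along a fixed transversal vector field.
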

Note that if a foliation $\cF$ as in Theorem~\ref{thm2.4} does not contain
any $(+)$-fcd, then it also does not contain any $(-)$-fcd
and thus Corollary~6.3.4 in Candel and Conlon~\cite{caco1} implies that $\cF$ is homologically taut.

\section{Proof of Theorem~\ref{main} in the case $n\leq 2$.}
\label{secnew}
Consider the curve $\a=\{(t, 3 +\cos t )\mid t\in \R\}$ in the $(x_1,x_2)$-plane and let $C$ in $\rth$
be the surface obtained by revolving $\a$ around the $x_1$-axis. Let $\cF$ be the
foliation of $C$ by circles contained in planes orthogonal to the $x_1$-axis, whose leaves have constant geodesic curvature, see Fig.~\ref{revolution}.
\begin{figure}
\begin{center}
\includegraphics[height=6cm]{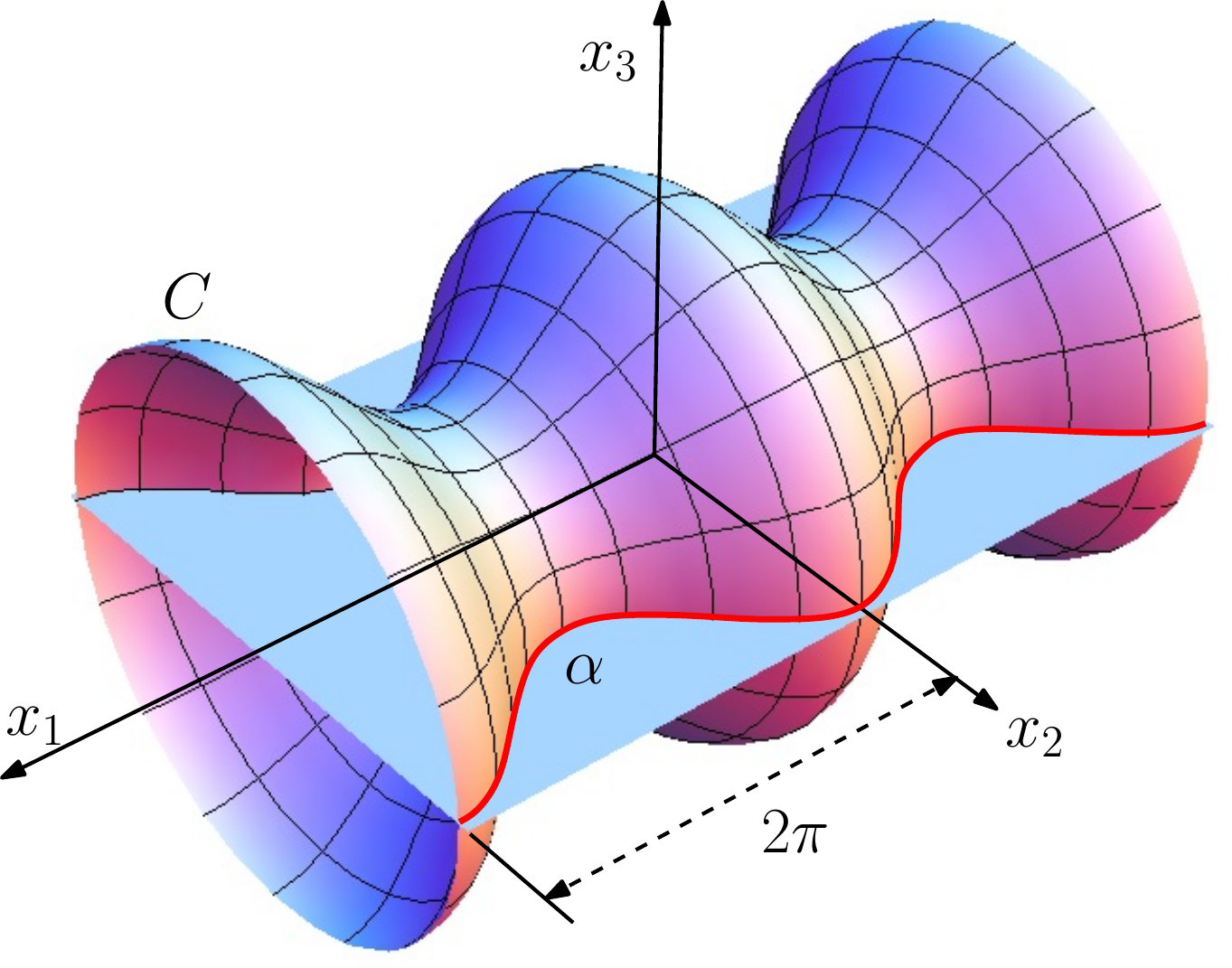}
\caption{The circles in $C$ foliate the surface by curves of constant geodesic curvature. The symmetry $R$ is
the composition of the reflection in the $(x_1,x_2)$-plane (depicted in the figure) with the translation by the vector $(2\pi ,0,0)$.}
\label{revolution}
\end{center}
\end{figure}
$\cF$ is transversely oriented by the normal vectors to the circles in $C$
that have positive inner product in $\rth$ with $\partial_{x_1}$.  Since the map  $R(x_1,x_2,x_3)= (2\pi +x_1,x_2,-x_3)$
preserves the orientation of the CMC foliation, then $\cF$ descends to a CMC foliation
of the Klein bottle $C/R$ or to the torus $C/(R^2)$.
By classification of closed surfaces, a closed surface with Euler characteristic zero
must be a torus or a Klein bottle.
Thus, Theorem~\ref{main} trivially holds when $n=2$.

In the remainder of this section we will produce  a Riemannian metric $g$ on a two-dimensional torus $M$
together with a  one-dimensional, transversely oriented
 foliation $\cF $ by curves  of constant geodesic curvature, where the geodesic
curvature function $H_{\cF}$ of the foliation changes sign and $\cF$ contains two Reeb components.
We will construct such a metric $g$ with a one-dimensional isometry group whose elements leave invariant $\cF$. The
construction will be made so that both $\cF$ and $g$ induce similar objects on a quotient Klein
bottle.

Consider a rectangle $R$ with sides $a_1,a_2,b_1,b_2$ identified as
usual to produce a flat torus $M$ after identification
of its sides; assume that the length of $a_1,a_2$ is 1. Consider vertical
lines $l_0,\ldots ,l_3$ as in Fig.~\ref{2dim}.
\begin{figure}
\begin{center}
\includegraphics[height=6.5cm]{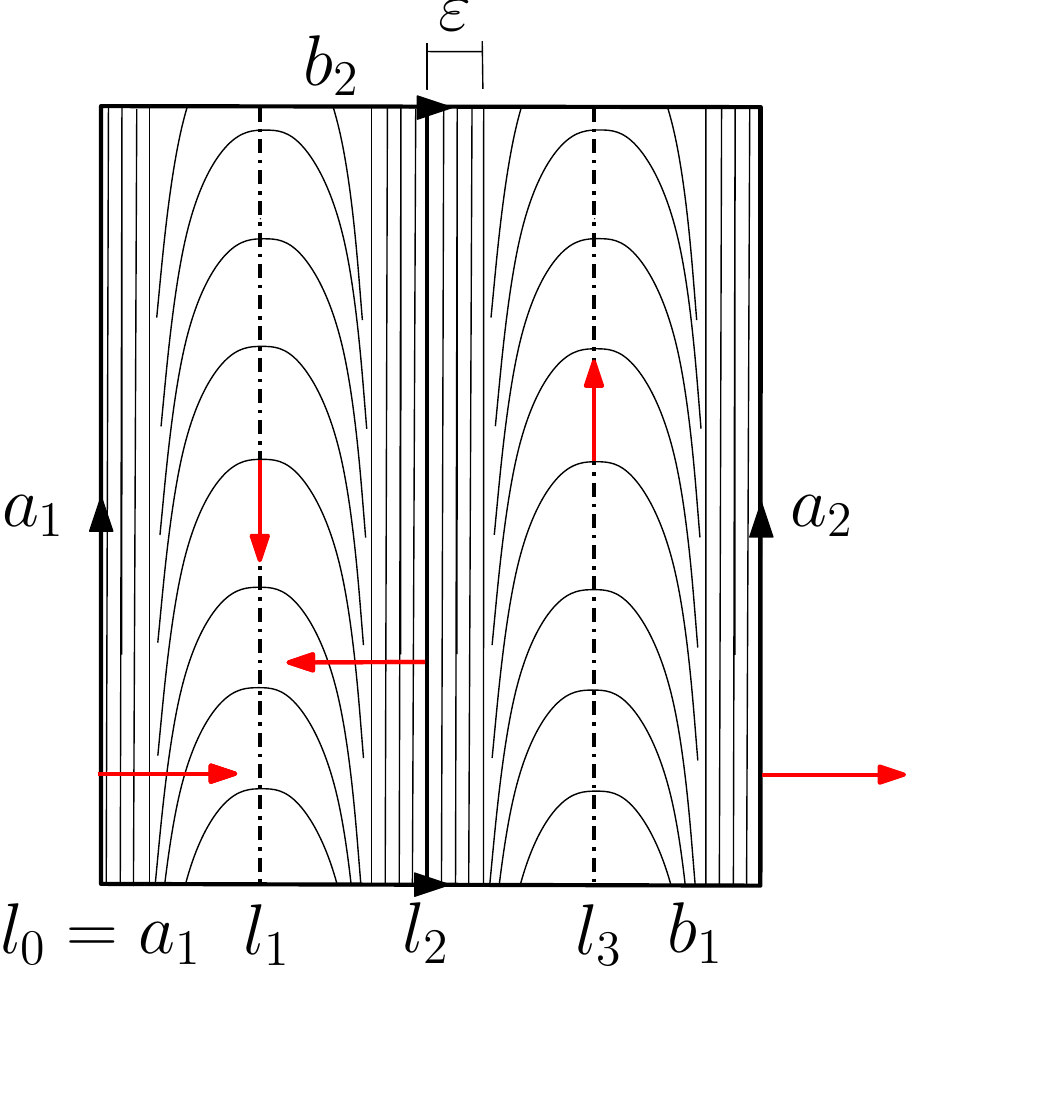}
\caption{The arrows indicate the transverse orientation on the foliation $\cF$.}
\label{2dim}
\end{center}
\end{figure}Let $G$ be the group of isometries of the
flat metric on $M$ generated by the reflections across $l_0,l_1$ and by the $\esf^1$-family of
 vertical translations; as a group, $G$
is isomorphic to the product of $\Z _2\times \Z_2\times \esf^1$.

We now describe the foliation $\cF $ of
$M$  depicted in Fig.~\ref{2dim}. Take $\ve >0$ sufficiently
small so that the closed vertical strips of width $\ve $ around the lines $l_0$ and $l_2$
are pairwise disjoint. $\cF $ restricts to the union $S$ of these strips as the product foliation
 by vertical lines; the restriction of $\cF$ to each of the open
strips in the complement of $S$ consists of one-dimensional Reeb foliations
invariant under $G$, see Fig.~\ref{2dim}. The transverse orientation we
choose for $\cF$ is described in the figure.

Consider a smooth function $f\colon M\to [-1,1]$ with the following properties.
\begin{enumerate}[(F1)]
\item $f$ is constant $-1$ (resp. $+1$) on the component of $M-S$ that contains $l_1$
(resp. $l_3$).
\item $f$ is invariant under the $\esf^1$-action by vertical
translations, and under the reflections in $l_1,l_3$.
\item If $\psi $ is the reflection in either $l_0$ or $l_2$, then $f\circ \psi =-f$.
\end{enumerate}

By Theorem~\ref{thm2.4},   $f$ is an admissible
function for $\cF$ in the sense explained at
the end of Section~\ref{sec2}.  However, we are interested in
obtaining the following more specific result. Given $i=0,\ldots ,3$, we
let $R_i$ denote the reflection in $M$ across $l_i$. Let $T_{\theta }$ be
the vertical translation by $\t \in [0,1)$ in $M$ (recall that the length of $a_1=1$).

\begin{theorem} \label{thm:2dim}
There exists a metric $g$ on $M$ that is invariant under the group $G$ such that
the function $-f$ is the geodesic curvature function of the foliation $\cF$. In particular,
for $\displaystyle I=T_{\frac12}\circ R_1$,
the quotient Klein bottle $M/I$ with its quotient metric
admits the transversely oriented CMC foliation $\cF/I$.
\end{theorem}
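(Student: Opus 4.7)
The plan is to construct the metric $g$ directly on $M$ by piecing together explicit models on the four vertical strips of the decomposition of $M$ induced by $\cF$: the two product strips $S_0,S_2$ around $l_0,l_2$ (where $\cF$ is by vertical lines) and the two Reeb strips around $l_1,l_3$. The guiding principle is to exploit the $\esf^1$-action by vertical translations and the reflections in $G$ so that each local piece is $G$-invariant in a manner compatible with the sign conventions of the problem (vertical translations and $R_1,R_3$ preserve $\cF$ and its transverse orientation and fix $f$, while $R_0,R_2$ preserve $\cF$ but reverse the transverse orientation and, by (F3), the sign of $f$, so both sides of the equation ``$-f=$ geodesic curvature'' transform consistently); then to glue the pieces smoothly.

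On the product strips, where $f$ depends only on the horizontal coordinate $x$, I would take the warped-product ansatz $g=dx^2+\varphi(x)^2\,dy^2$. A short computation gives that the geodesic curvature of the vertical leaf $\{x=x_0\}$ with unit normal $\partial_x$ equals $-\varphi'(x_0)/\varphi(x_0)$, so the choice
\[
\varphi(x)=\exp\!\left(\int_{l_i}^x f(t)\,dt\right),\qquad l_i\in\{l_0,l_2\},
\]
yields exactly $-f$ as the geodesic curvature function on that strip. Oddness of $f$ about $l_0,l_2$ (property (F3)) forces $\varphi$ to be even about these lines, which gives $R_0$- and $R_2$-invariance of the metric; $\esf^1$-invariance is automatic from the $y$-independence of the ansatz.

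On the two Reeb strips, where $f\equiv\mp 1$, I need an $\esf^1$- and $R_1$- (resp. $R_3$-) invariant metric in which every leaf of the Reeb foliation has constant geodesic curvature $\pm1$. I would take a diagonal, $y$-independent ansatz $g=A(x)\,dx^2+B(x)\,dy^2$ and parameterize the Reeb leaves as level sets $\{y-u(x)=c\}$ of a smooth function $u$ even about $l_1$ (resp. $l_3$) with vertical asymptotes at the boundary of the strip. The constraint $k_g\equiv\pm1$ then becomes a single ODE relating $A,B,u$, whose extra freedom in $A,B$ can be used to make the solution smooth everywhere (including at the boundary leaves of the strip) and to match the Taylor data of the warped-product metric prescribed on the adjacent product strip.

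Since $I=T_{1/2}\circ R_1\in G$ acts by isometries on $(M,g)$ preserving $\cF$ together with its transverse orientation (using that $R_1$ does), both $g$ and $\cF$ descend to the Klein bottle $M/I$, yielding the transversely oriented CMC foliation $\cF/I$. The main technical obstacle is the Reeb-strip step: simultaneously imposing constant geodesic curvature on every leaf of a Reeb foliation, keeping the ansatz globally smooth across the compact boundary leaves, and matching the warped-product data on the interface with the product strips. An alternative route, bypassing this ODE analysis, would be to verify admissibility of $-f$ on the orientable torus $M$ (using (F1) and (F3) to check the sign condition on each minimal $(\pm)$-fcd) and invoke Theorem~\ref{thm2.4} to produce some metric realizing $-f$, then promote it to a $G$-invariant one by running the Oshikiri construction equivariantly; the direct approach, however, gives cleaner geometric control.
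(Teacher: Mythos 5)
Your approach is genuinely different from the paper's. The paper makes no attempt at an explicit construction: it applies Oshikiri's Main Theorem (Hahn--Banach) to produce a one-form $\omega_1$ with $d\omega_1 = f\,dV$ and $\omega_1$ nowhere zero on the leaves, averages $\omega_1$ over the finite reflections and the $\esf^1$-action to get a $G$-equivariant $\omega$, and then builds the metric $g = \frac{1}{\omega(JN)^2}g^1\oplus\omega(JN)^2 g^2$ so that $\omega$ restricts to the length form of the leaves; Rummler's calculation then identifies $-f$ as the geodesic curvature. This is precisely the ``alternative route'' you sketch at the end, so you do correctly diagnose a valid proof strategy. The advantage of the paper's route is that it completely bypasses the Reeb-strip ODE analysis; the advantage of a direct construction, if it worked, would be explicit control of the metric.

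However, your primary proposal has a genuine gap at the Reeb-strip step, and it is not merely a matter of ``filling in an ODE.'' With the ansatz $g = A(x)\,dx^2 + B(x)\,dy^2$ and leaves $\{y = u(x) + c\}$, the geodesic curvature involves only first $x$-derivatives of $A$ and $B$, so the constraint $\kappa_g \equiv \pm 1$ is a single \emph{first-order} ODE in the metric coefficients once the foliation (hence $u$) and a gauge for $x$ are fixed. This gives you essentially one shooting parameter. But the matching condition at the interface with the adjacent product strip is a condition on the \emph{full infinite jet} of the metric there, since the product-strip warping function $\varphi(x) = \exp\bigl(\int_{l_i}^{x} f\bigr)$ is completely rigid once $f$ is given. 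The leading-order asymptotics of your ODE at the boundary circle do force $\varphi'/\varphi \to f$, which is encouraging, but higher-order agreement is an overdetermined problem, and nothing in the proposal explains why the ``extra freedom in $A,B$'' (two scalar functions of one variable, minus one reparameterization gauge, minus the curvature constraint) is sufficient to hit the prescribed jet on both sides of each Reeb strip while keeping the metric smooth and nondegenerate across the compact boundary leaves. You would need either to carefully exploit the freedom to choose the particular Reeb foliation in its isotopy class and the interpolating part of $f$ on the strips $S$, or to set up a genuine singular-ODE argument near $x = \pm a$ showing the boundary jet is attainable. As written, the claim that the gluing can be done is an assertion, not an argument — and it is exactly the content that the Hahn--Banach/Rummler machinery in the paper is designed to replace.

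One further caution: the remark following the theorem in the paper warns that the product-metric construction of Section 4 fails in dimension $n=2$ because a product metric on a surface would be flat. Your warped-product ansatz $A\,dx^2 + B\,dy^2$ sidesteps that specific obstruction, so your plan is not ruled out by that remark, but the remark does signal that the low-dimensional case is more delicate than the $n\geq 3$ analogue — reinforcing that the Reeb-strip step deserves a real argument rather than an appeal to ``extra freedom.''
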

\begin{proof}
The key result used to prove
Theorem~\ref{thm2.4} stated above is Oshikiri's Main Theorem in~\cite{osh0}, and our proof
will also be based on this Main Theorem.
First note that $\int _Mf\, dV=0$, where $dV$ is the volume form on $M$
associated to the flat metric (with the usual orientation
induced from the one of $\R^2$), and so there exists a 2-form $\omega'$
such that $d\omega'=f\,dV$.
Since  $\int _Mf\, dV=0$ and the other homological condition in
item~3 of Oshikiri's Main Theorem in~\cite{osh0}  is satisfied for $\cF$, then
the application of the Hahn-Banach theorem in
the proof of  the Oshikiri's Main Theorem
guarantees the existence of a 1-form $\omega_1$ on $M$ such that
$d \omega_1=f\,dV$ and $\omega_1 $ restricted to any leaf of
$\cF$ is a nowhere zero one-form on the leaf.
Note that in order to verify that item~3 of Oshikiri's Main Theorem holds, it is easier  to
check that the following equivalent
condition  (3*) given on page 515 in ~\cite{osh1} holds:
\begin{enumerate}[(i)]
\item $\int_M f \, dV = 0$, and
\item  $\int_D f \, dV > 0$ for any $( + )$-fcd $D$.
\end{enumerate}

We next explain how to modify $\omega_1$ to obtain
another 1-form $\omega$ on $M$ that satisfies:
\begin{enumerate}
\item $T_\theta^*(\omega)=\omega$, for each $\t \in [0,1)$.
\item $R_i^*(\omega)=(-1)^{i+1}\omega$, $i=0,\ldots ,3$.
\item  $d \omega=f\,dV$ and $\omega$ restricted to any leaf of
$\cF$ is a nowhere zero one-form on the leaf.
\end{enumerate}

To obtain the desired 1-form, first average $\omega_1$ with respect
to the $\esf^1$-action given by the vertical translations
to obtain a 1-form $\omega_2$; in other words, $\omega_2(v_p)= \int_0^1 (T^*_{\theta}  \omega_1)(v_p)\, d\t $
for each tangent vector $v_p\in T_pM$; note
that $\omega_2$ also satisfies condition 3. Next successively define
\[
\omega_3=\frac12(\omega_2  +R_1^*\omega_2),
\ \omega=\frac12(\omega_3  -R_2^*\omega_3).
\]

We write the flat metric $g_0$ on $M$ as $g_0=g^1\oplus g^2$, where we are using the
$g_0$-orthogonal decomposition of $TM$ by the normal and tangent subbundles $T^{\perp}\cF$
and $T\cF$ with respect to $g_0$, where $g^1=g_0|_{T^{\perp }\cF}$ and $g^2=g_0|_{T\cF}$.
Then, we can express $\omega _p|_{T_p\cF}$ as a multiple of the length one-form $dl_{g^2}$ associated
to $g^2$, namely $\omega _p=\omega _p(JN_p)\, dl_{g_0}$, where $N_p$ is the unit normal
vector (with respect to $g_0$) given by the transverse orientation to $\cF$, and $J$ is the almost
complex structure on $M$ given by its orientation. Consider the
metric on $M$ given by
\[
g=\frac{1}{\omega (JN)^2}g^1\oplus \omega (JN)^2g^2
\]
with respect to the same decomposition of $TM$. It follows that
the volume form  associated to $g$ is $dV$, and the restriction of $\omega $
to $T\cF$ coincides with the length one-form of the restriction of $g$ to $T\cF$.
Under this definition of metric on $M$ and applying the so called Rummler's calculation
(see the beginning of Section~\ref{sec4.5} for more details), we
conclude that $-f$ is the geodesic curvature function of the metric $g$.
\end{proof}

\begin{remark}{\em
Theorem~\ref{thm:2dim} has an $n$-dimensional generalization that allows one to construct metrics on
$\esf^{n-1}\times \esf^1$, where the two vertical rectangles in Fig.~\ref{2dim} bounded by $l_0,l_2$ are each replaced
by the product of an $(n-1)$-dimensional 'horizontal' disk $D$, $n\geq3$, with a 'vertical' circle $\esf^1$; these
two domains $D\times \esf^1$ are foliated by enlarged Reeb components (see Definition~\ref{rem3.1} in the next section),
which are invariant under the action of vertical translations and reflections about vertical hyperplanes passing through
$l_1$ and $l_3$.
In the next section we will construct such CMC foliations so that the ambient metric is a product metric on
$D\times \esf^1$, for some metric on $D$; note that this type of metric in the case when $n=2$ would necessarily be flat,
which prevents the construction in Section~\ref{sec3} to work in dimension $n=2$.
}\end{remark}

\section{Rotationally symmetric $H$-foliations of Reeb type.}
 \label{sec3}
From this point on, we will always assume that $n\geq 3$.
\begin{definition}
\label{def3.1} {\em
Let $\D (R)$ the open disk of radius $R>0$ in $\R^{n-1}$. For $z\in \R$,
let $\Sigma _z\subset \R^{n-1}\times \R$ be the
graph of the function
\[
f +z\colon \D(1)\to \R,\quad
 f(x)=-\sec \left( \frac{\pi }{2}\| x\|^2\right) ,\quad x\in \D (1),
 \]
 and let $\S_\infty$ be the cylinder $\partial \D(1) \times \R\subset \R^n$.
\ben
\item  Consider the foliated closed cylinder $\ov{\D}(1)\times \R$ with leaves
$\S_z$, $z\in (-\infty, \infty]$. Up to diffeomorphism, any such foliation is called
a {\em Reeb-type foliation} of the cylinder.

\item After passing to the quotient by the natural action of
$\Z$ acting on $\ov{\D}(1)\times \R$ by  integer
translations in the $n$-th variable,
we obtain a compact $n$-manifold with boundary $(\ov{\D}(1)\times \R)/\Z $; we call this
foliated manifold a {\em Reeb-type foliation} of the solid torus $\ov{\D }\times \esf ^1$.

\item Given a foliation $\cF$ of
an $n$-manifold $X$, a compact saturated domain $R\subset X$ 
is called a {\em Reeb component} of $\cF$ if it is diffeomorphically equivalent
to a Reeb-type foliation of $\ov{\D }\times \esf ^1$.\een}
\end{definition}

In this section we will construct a smooth Reeb type foliation enlarged by
a product foliation (see (E1), (E2) below) of a solid $n$-dimensional torus
together with a smooth,
rotationally symmetric ambient metric so that the leaves of the foliation
have constant mean curvature, and torus leaves sufficiently close to the boundary of the
solid cylinder are minimal; by torus leaves, we mean leaves that are diffeomorphic to $\esf^{n-2} \times \esf^1$.
The existence of some CMC foliation on the Reeb torus follows
easily from Oshikiri's Theorem~\ref{thm2.4}, although we will construct it explicitly by
imposing rotational symmetry, as the analysis of the corresponding ODE could be interesting
on its own right.


We consider on $\ov{\D }=\ov{\D }(1)$ polar coordinates $r\in [0,1]$,
$\t \in \esf ^{n-2}(1)$. Given $H>0$, our goal is to construct an
{\it enlarged Reeb-type foliation} of the cylinder $\ov{\D }\times \R $; more precisely,
we will construct a smooth, $SO(n-1)$-invariant metric $ds^2$ over $\ov{\D }$
and a (smooth) $SO(n-1)$-invariant, CMC foliation $\cF$ of the Riemannian product
$(\ov{\D }\times \R,ds^2+dz^2)$, satisfying the following conditions:
 \begin{enumerate}[(E1)]
\item There exists $r_1\in (0,1)$ such that the leaves of $\cF$ in $\D (r_1)\times \R $
are of the form $\Sigma _z$, $z\in \R $, where $\Sigma _z$ is the
vertical translate by $z\in \R $ of a smooth, $SO(n-1)$-invariant $H$-hypersurface $\Sigma $
which is a graph over $\D (r_1)\times \{ 0\} $. Furthermore, the
generating profile curve $\G \subset \{ (r,z)\ | \ r\in [0,1),\, z\in \R \} $ of $\Sigma $
under the action of $SO(n-1)$ can be globally
parameterized as $\G (r)=(r,z(r))$, with $z\colon [0,r_1)\to \R $
being a smooth function that satisfies $z'(0)=0$, $z'(r)>0$ for all $r\in (0,r_1)$ and $z(r)\to
+\infty $ as $r\to r_1^-$. Note that the restriction
of $\cF$ to $\D(r_1)\times \R $ is an $H$-foliation
with respect to the ambient metric $ds^2+dz^2$.

\item The restriction of $\cF$ to $(\ov{\D }-\D (r_1))\times \R $ is the product foliation by
vertical $(n-1)$-dimensional cylinders $(\partial \D (R))\times \R  $, $R\in [r_1,1]$.
These cylinders also have constant mean curvature, which will vary from the value $H$ at $R=r_1$
to the value zero for all $R\in [r_2,1]$ (here $r_2\in (r_1,1)$).
Note that the restriction of $\cF$ to $\overline{\D}(r_1)\times \R $ is a Reeb-type foliation in the sense of
item~1 of Definition~\ref{def3.1}.
\end{enumerate}

\begin{definition}
\label{rem3.1}
{\rm
After taking quotients by the natural action of $\Z $ by translations in the vertical factor of $\ov{\D}\times \R $,
the quotient foliation of $\ov{\D }\times \esf^1$ will be called an {\it enlarged Reeb-type foliation}
of $\ov{\D }\times \esf^1$, see Fig.~\ref{enlarged}.
}
\end{definition}
\begin{figure}
\begin{center}
\includegraphics[height=6.8cm]{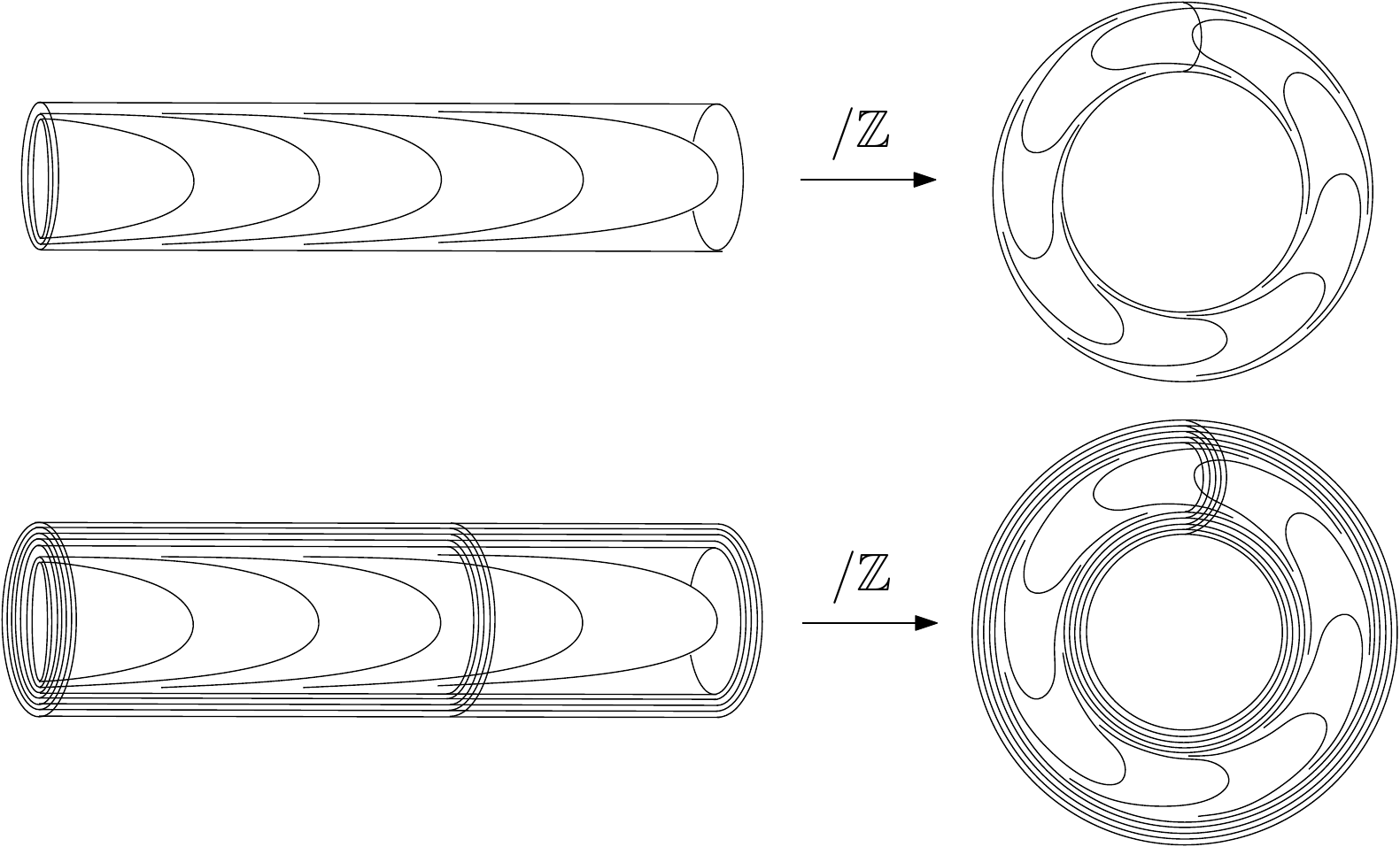}

\caption{Above: A Reeb-type foliation of a cylinder (left) and of $\ov{\D }\times \esf^1$ (right).
Below:  An enlarged Reeb-type foliation of a cylinder (left) and of $\ov{\D }\times \esf^1$ (right).}
\label{enlarged}
\end{center}
\end{figure}

The following analysis of the ODE system that corresponds to a rotationally
 symmetric $H$-hypersurface in $\ov{\D }\times \R $ is inspired by the paper~\cite{FMP} by Figueroa,
 Mercuri and Pedrosa. Every smooth, rotationally symmetric
 Riemannian metric on $\ov{\D }$ can be written in the form
\begin{equation}
\label{rot0}
ds^2=dr^2+\phi (r)^2\, d\t ^2,
\end{equation}
where $d\t ^2$ denotes the metric of sectional curvature $1$ on $\esf^{n-2}(1)$ and
$\phi \colon [0,1]\to [0,\infty )$ is a smooth function that satisfies $\phi (r)>0$ if $r\in (0,1]$,
$\phi '(0)=1$ and all whose
derivatives of even order (including order zero) vanish at $r=0$. Choose local coordinates
$\t _1,\ldots ,\t _{n-2}$ on $\esf^{n-2}(1)$ so that $\{ \partial _{\t _i}
 =\frac{\partial }{\partial \t _i}\ | \ i=1,\ldots ,n-2\} $ is an orthonormal basis of
 $(T\esf ^{n-2}(1),d\t ^2)$.
Note that with respect to $ds^2$, $\partial _r=\frac{\partial }{\partial r}$ is unitary and orthogonal to
 each $\partial _{\t _i}$, and that $|\partial _{\t _i}|=\phi $.
The $(n-2)$-dimensional sphere $\esf^{n-2}(r)$ of radius $r\in (0,1]$ in $(\ov{\D },ds^2)$ is totally umbilic
with constant mean curvature
\begin{equation}
\label{eq:rot0}
\kappa _r=\frac{\phi '(r)}{\phi (r)}
\end{equation}
with respect to the inner pointing unit normal vector $-\partial _r$.

We consider on $\ov{\D }\times
\R $ the product metric $g=ds^2+dz^2$, where $z\in \R $ represents height in the
$\R $-factor. The group $SO(n-1)$ acts on $(\ov{\D }\times \R ,g)$ by isometries
(rotations about the $z$-axis $\{ r=0\} $), with orbit space
 ${\cal B}=\{ (r,z)\in \R^2\ | \ 0\leq r\leq 1\} $,
and the induced metric by $g$ on ${\cal B}^*=\{ (r,z)\ | \ 0<r\leq 1\} $ is flat. Suppose that
$s\mapsto \G (s)=(r(s),z(s))\in {\cal B}^*$ is a smooth curve parameterized by arc length.
Let us denote by $\sigma (s)$ the angle that the velocity vector $\dot{\G }(s)$ makes with
the  $\partial _r$ direction. The planar curvature of $\G $ is
$\kappa _{\G }(s)=\dot{\sigma }(s)$, and the Frenet dihedron of $\G $ can be written in coordinates with
respect to the orthonormal basis $\{ \partial _r,\partial _z\} $ as
\[
t=t(s)=(\dot{r},\dot{z})=(\cos \sigma ,\sin \sigma ),\quad
n=n(s)=(-\sin \sigma ,\cos \sigma ).
\]

Let $\Sigma $ be the smooth hypersurface of $\ov{\D }^*\times \R $ generated by $\G $ after
the action of $SO(n-1)$ about the $z$-axis, where  $\ov{\D }^*=\ov{\D }-\{ 0\} $. We want to relate the mean
curvature function ${\cal H}$  of $\Sigma $ with $\kappa _{\G }$ and $\kappa _r$.
As $\Sigma $ is $SO(n-1)$-invariant, we can do
this by computing the mean curvature of $\Sigma $ at a
point $p\in \Sigma \cap \{ (r,\vec{0},z)\ | \ 0<r\leq 1\} $, where $\vec{0}$ is the
origin in $\R^{n-3}$. After a slight abuse of notation,
we will identify $\Sigma \cap \{ (r,\vec{0},z)\ | \ 0<r\leq 1\} $ with $\G $.
The tangent space $T_p\Sigma $ at such a point $p\in \Sigma $ admits the orthonormal
basis $\{ X_i=\frac{1}{\phi }\partial _{\t _i}\ | \ i=1,\ldots ,n-2\} \cup \{ \dot{\G }\}  $ and thus,
\begin{equation}
\label{eq:rot1}
(n-1){\cal H }=\sum _{i=1}^{n-2}g\left( \nabla _{X_i}X_i ,N\right)+g\left(  \nabla _{\dot{\G }}\dot{\G },N\right),
\end{equation}
where $\nabla $ is the Riemannian connection of $g$ and  $N$ stands for the unit normal
vector to $\Sigma $ that coincides with $n$ at $p$ (the mean curvature function ${\cal H}$
is computed with respect to $N$). As $X_i$ is a horizontal vector field on the Riemannian product
$(\ov{\D }^*\times \R ,g)$, then $\nabla _{X_i}X_i$ is also horizontal and given by $\nabla _{X_i}X_i=
(\nabla ^{\ov{\D }}_{X_i}X_i,0)$, where $\nabla ^{\ov{\D }}$ is the Riemannian connection in
$(\ov{\D },ds^2)$. Since $N=n=-\sin \sigma \, \partial _r+\cos \sigma \, \partial _{z }$, then
\[
\sum _{i=1}^{n-2}g\left( \nabla _{X_i}X_i ,N\right)=-\sin \sigma \sum _{i=1}^{n-2}
ds^2(\nabla ^{\ov{\D }}_{X_i}X_i,\partial _r)=(n-2)\kappa _{r}
\sin \sigma \stackrel{(\ref{eq:rot0})}{=}(n-2) \frac{\phi '(r)}{\phi (r)} \sin \sigma ,
\]
where $r=r(p)$. As for the last term of (\ref{eq:rot1}), since the vertical plane $\{ y=0\} $
is totally geodesic in $(\ov{\D }\times \R ,g)$, then $g\left(  \nabla _{\dot{\G }}\dot{\G },N\right)
=\kappa _{\G }=\dot{\sigma }$. In summary, given $H\in \R $, the curve $\G =\G (s)$
generates after the action of $SO(n-1)$ an $H$-hypersurface of $(\ov{\D }^*\times \R ,g)$
if and only $r(s),z(s),\sigma (s)$ satisfy the ODE system
\begin{equation}
\label{eq:rot2}
\left\{ \begin{array}{rcl}
\dot{r}&=&\cos \sigma \\
\dot{z}&=&\sin \sigma \\
\dot{\sigma}&=&(n-1)H-(n-2)\frac{\phi '(r)}{\phi (r)} \sin \sigma .
\end{array}\right.
\end{equation}

The above system only makes sense in ${\cal B}^*$. Nevertheless, it is well-known
that solutions $\G $ of (\ref{eq:rot2}) that go to the inner boundary $\{ r=0\} $ of ${\cal B}^*$
must enter perpendicularly (see e.g., Eells and
Ratto~\cite{eer1} or Hsiang and Hsiang~\cite{hshs2}).
Three other basic properties of the system (\ref{eq:rot2}) are the following ones.
\begin{enumerate}[P1.]
\item Any vertical translation of a solution of (\ref{eq:rot2}) is also a solution
 of (\ref{eq:rot2}). In particular, (\ref{eq:rot2}) admits a first integral.
\item If a solution $\G (s)$ of (\ref{eq:rot2}) defined in $(s_0-\ve ,s_0]$ has
vertical tangent line at $s_0$, i.e., $\sigma (s_0)=\pm \pi /2$,
then $\G(s)$ can be extended by reflecting across
the horizontal line $\{ z=z(s_0)\} $ to a solution of
(\ref{eq:rot2}) defined in the interval $(s_0-\ve ,s_0+\ve ) $.
\item The vertical line $\G (s)=(r,s)$ (with $r\in (0,1]$ fixed) is a solution of (\ref{eq:rot2}),
and the value $H$ of the mean curvature of the corresponding vertical cylinder $C(r)=
(\partial \D (r))\times \R $  is $H=\frac{n-2}{n-1}\frac{\phi '(r)}{\phi (r)}$.
\end{enumerate}
Consider the smooth function $h\colon [0,1]\to \R $ given by
$h(r)=(n-1)H\int _0^r\phi (u)^{n-2}du$. It is straightforward to show that
\begin{equation}
\label{eq:rot3}
J(s)=\phi (r(s))^{n-2}\sin  \sigma (s)-h(r(s))
\end{equation}
is a first integral of (\ref{eq:rot2}), i.e., $J(s)$
is constant along any solution of (\ref{eq:rot2}).

\begin{remark}
{\rm
It is worth mentioning the following
geometric interpretation of the first integral $J$. The Killing vector field $\partial _z
=\frac{\partial }{\partial _z}$ on $(\overline{\D }\times \R ,g)$ induces a notion
of {\it scalar flux} on every $H$-hypersurface $\Sigma $ (not necessarily of revolution), defined as
\[
\mbox{Flux}(\Sigma ,\g )=\int _{\g }\langle \partial _z,\eta \rangle +(n-1)H\int _D
\langle \partial _z,N_D\rangle ,
\]
where $\g \subset \Sigma $ is a $(n-2)$-cycle on $\Sigma $, $D\subset \overline{\D }\times \R $
is a compact hypersurface with $\partial D=\g =D\cap \Sigma $ and $N_D$ is a unit normal vector field to $D$.
The divergence theorem shows that $\mbox{Flux}(\Sigma ,\g )$ only depends
on the homology class of $\g $ in $\Sigma $. In the particular case that $\Sigma $ is a hypersurface of
revolution and $\g $ is the $SO(n-1)$-orbit of a point in $\Sigma $, then
Flux$(\Sigma ,\g )=J$. This equality explains why $J=0$ is the value of the first integral
for CMC spheres (since $\g $ is null homologous in this case)
in the particular cases $n=3$,
$\phi (r)=r$ (which produces the
flat standard metric on $\ov{\D }^2$), $\phi (r)=\sin (r)$ (so $(\ov{\D},ds^2)$ is
isometric to a closed hemisphere in $\esf^2(1)$) and $\phi (r)=\sinh (r)$ (and $(\ov{\D},ds^2)$
can be viewed inside $\Hip ^2(-1)$).
}
\end{remark}
Rather than classifying all solutions of (\ref{eq:rot2})
by analyzing every possible value of $J$,
we will only study the case $J=0$ as we are interested in producing an example of an
$H$-hypersurface of $\ov{\D }\times \R $ that resembles in a certain sense, a graphical
CMC halfsphere; by this we mean the following. We will choose $\phi $ so that
the behavior of the profile curve $\G $ solution of (\ref{eq:rot2})
is similar to the one of a lower half-sphere, but with infinite length
and asymptotic to a vertical line instead of achieving the
vertical direction for its tangent line in a compact portion of curve.

Suppose that $\G (s) $ is a solution of (\ref{eq:rot2}) with $J=0$. Thus,
$\sin \sigma =h/\phi ^{n-2}$ and $\cos \sigma =\sqrt{1-\left( h/\phi ^{n-2}\right) ^2}$.
These equations only make sense if $h/\phi ^{n-2}$ takes values in $(-1,1)$.
With this observation in mind, it is easy to deduce from the equation $h'=(n-1)H\phi ^{n-2}$ and
L'H\^{o}pital's rule that
\begin{equation}
\label{eq:rot3a}
\lim _{r\to 0^+}\frac{h(r)}{\phi (r)^{n-2}}=0,\qquad
\lim _{r\to 0^+}\left( \frac{h}{\phi ^{n-2}}\right) '(r)=H.
\end{equation}
This implies that the number
\[
r_1=\sup \left\{ r\in (0,1]\ : \ \frac{h(r)}{\phi (r)^{n-2}}\in (0,1) \mbox{ for all }r\in (0,r_1)\right\}
\]
exists (it clearly depends on the choice of $\phi $).

Note that if $r_1<1$, then $\frac{h(r_1)}{\phi (r_1)^{n-2}}=1$;
otherwise $r_1=1$ and we only can ensure that $\lim _{r\to 1^-}\frac{h(r)}{\phi (r)^{n-2}}\leq 1$.
Therefore, in $(0,r_1)$ we have
\begin{equation}
\label{eq:rot4}
\frac{dz}{dr}=\frac{dz}{ds}\frac{ds}{dr}=\frac{\sin \sigma }{\cos \sigma }=\frac{h/\phi ^{n-2}}{\sqrt{1-
\left( h/\phi ^{n-2}\right) ^2}}.
\end{equation}
From (\ref{eq:rot3a}) and (\ref{eq:rot4})  we deduce that
$z=z(r)$ is an increasing function of $r\in [0,r_1)$ (strictly increasing if $r>0$), whose graph
$\G $ intersects the vertical line $\{ r=0\} $ orthogonally.

We next compute the length of $\G $:
\begin{equation}
\label{eq:rot5}
 \mbox{length}(\G )_0^R=\int _0^R\sqrt{1+\left( \frac{dz}{dr}\right) ^2}dr\stackrel{(\ref{eq:rot4})}{=}
 \int _0^R\frac{dr}{\sqrt{1-\left( h/\phi ^{n-2}\right) ^2}},
\end{equation}
and we want to choose $\phi $ so that
${\displaystyle \lim _{R\to r_1^-}\mbox{length}(\G )_0^R=+\infty }$.
Note that if we impose
\begin{equation}
\label{eq:rot5a}
\frac{h}{\phi ^{n-2}}=\sqrt{1-(r_1-r)^2} \qquad \mbox{in }[r_1',r_1),
\end{equation}
for some $r_1'\in (0,r_1)$, then
\begin{equation}
\label{eq:rot5b}
\mbox{length}(\G )_{r_1'}^R\stackrel{(\ref{eq:rot5})}{=}\int _{r_1'}^R\frac{dr}{r_1-r}
=\log \left( \frac{r_1-r_1'}{r_1-R}\right)  ,
\end{equation}
which limits to $+\infty $ if $R\to r_1^-$, as desired. Equations (\ref{eq:rot5a}) and $h'=(n-1)H\phi ^{n-2}$
lead to an ODE of order one for $h$ whose solution is $h(r)=e^{(n-1)H \arcsin (r-r_1)}$. This function $h$ produces
\begin{equation}
\label{eq:rot6}
\phi \colon [0,r_1)\to \R , \quad \phi (r)=\frac{e^{\frac{n-1}{n-2}H\, \arcsin (r-r_1)}}{\left[ 1-(r_1-r)^2\right] ^{\frac{1}{2(n-2)}}}.
\end{equation}
We therefore conclude that with the choice of $\phi $ given by (\ref{eq:rot6})
for $r_1\in (0,1)$ fixed but arbitrary,
there exists a solution $\G  $ of (\ref{eq:rot2}) with first integral $J=0$,
that can be parameterized by $\G(r)=(r,z(r))$ with $z\colon (0,r_1)\to \R $ a smooth function that satisfies
$z'(r)>0$ for all $r\in (0,1)$ (by (\ref{eq:rot4}) and (\ref{eq:rot5a}))
and $z(r)\to +\infty $ as $r\to r_1^-$ (by (\ref{eq:rot5b})).

Note that with definition (\ref{eq:rot6}), $\phi $ fails to satisfy the conditions to define
a smooth metric at $r=0$ (for instance, $\phi (0)>0$). This problem can be overcome by truncating $\phi $
appropriately; to do this, fix $r_0\in (0,r_1)$ and substitute the definition of $\phi $ in (\ref{eq:rot6})
in $[0,r_0]$ by a smooth positive function so that the resulting function, also called $\phi $, is of class
$C^{\infty }$ at $r=r_0$ and $\phi '(0)=1$, $\phi ^{(2k)}(0)=0$ for all $k\in \N \cup \{ 0\} $, and
$h(r)/\phi (r)^{n-2}$ keeps taking values in $[0,1)$ for all $r\in [0,r_1)$. Plugging this new function
$\phi \colon [0,r_1)\to (0,\infty )$ in (\ref{rot0}) we define a smooth, $SO(n-1)$-invariant
metric in $\D (r_1)$ (not analytic). By uniqueness of solutions
of the ODE system (\ref{eq:rot2}) with given initial
values, we also conclude that restriction $\G |_{[r_0,r_1)}$ of the
graphical solution $\G $ appearing in the last paragraph
can be smoothly extended to $[0,r_1)$ as a solution of (\ref{eq:rot2}) with the
new function $\phi $. The resulting solution, relabeled as $\G(r)=(r,z(r))$,
is again a vertical graph over $[0,r_1)$ with $z'(0)=0$, $z(r)$ strictly increasing in $(0,r_1)$
 and $z(r)\to +\infty $ as $r\to r_1^-$.

We now extend  $\phi $ to $[r_1,1]$. Note that (\ref{eq:rot6}) defines a smooth function at
$r=r_1$, so simply extend $\phi $ smoothly to $[0,1]$ by a positive function in $[r_1,1]$, also
denoted by $\phi $, all whose derivatives coincide with the corresponding ones of  (\ref{eq:rot6}) at $r_1$.
Plugging this function $\phi $ in~(\ref{rot0}) we obtain a smooth, $SO(n-1)$-invariant metric
$ds^2$ on $\ov{\D }$. Recall that for any choice of $\phi $, the vertical cylinder $C(r)$ of radius $r$
in $(\overline{\D }\times \R, g)$ has constant mean curvature $H=\frac{n-2}{n-1}\frac{\phi '(r)}{\phi (r)}$.
As the graphical solution $\G (r)=(r,z(r))$, $r\in [0,r_1)$, constructed in the last paragraph generates an
$SO(n-1)$-invariant $H$-hypersurface $\Sigma $ which is smoothly asymptotic to the upper end of the vertical
cylinder $C(r_1)$, then the mean curvature of $C(r_1)$ is also $H$, so this must be the value of
$\frac{n-2}{n-1}\frac{\phi '(r)}{\phi (r)}$ (this can be checked by direct computation in (\ref{eq:rot6})).
Finally, we define the desired CMC foliation $\cF$ as the vertical translates of $\Sigma $ in $\D (r_1)\times \R $
together with the collection of vertical cylinders  $C(r)$, $r\in [r_1,1]$.

\begin{remark}
\label{rem3.3}
{\em
\ben[(A)]
\item Let $r_2\in (r_1,1)$. If we choose the extension of $\phi $ to $[r_1,1]$ so that it additionally
becomes constant in $[r_2,1]$, then the ambient metric on $(\ov{\D }(1)-\D(r_2))\times \R $ is flat and
the mean curvatures of the cylinders $C(r)$, $r\in [r_2,1]$ are zero.
Note that in the above construction, $H>0$, and $0<r_1<r_2$ are arbitrary.
\item The construction of the CMC foliation $\cF$ of $\ov{\D}(1)\times \R $ induces a CMC foliation of
$\ov{\D}(1)\times (\R/\l \Z )$ for any value of $\l >0$. Observe that the $(n-1)$-dimensional volume of the compact
hypersurface $(\partial \D (\de ))\times (\R /\l \Z )$ is independent of $\de \in [r_2,1]$ and of $H>0$,
and it can be prescribed arbitrarily by picking the appropriate value of $\l $.
\een
}
\end{remark}

\begin{definition}
\label{reeb}
{\em
We say that a codimension-one smooth foliation $\cF$ of a smooth $n$-manifold $X$ contains
an {\em enlarged Reeb component} $\Omega \subset X$  if $\Omega $ is a smooth saturated domain
and there exists a diffeomorphism $\phi \colon \ov{\D }\times \esf^1 \to \ov{\Omega }$
such that the pullback foliation $\phi ^*(\cF|_{\ov{\Omega }})$ is an
enlarged Reeb-type foliation of $\ov{\D }\times \esf^1 $, according to Definition~\ref{rem3.1}.
We refer to the image foliated region
$\phi (\ov{\D}\times \esf^1) \subset  X$ as an {\em enlarged Reeb component of $\cF$}.
Note that every enlarged Reeb component of $\cF $ contains a
classical Reeb component, according to Definition~\ref{def3.1}.
}
\end{definition}

\section{The proof of Theorem~\ref{main}.}
\label{sec:main}
Let $X$ be a closed   smooth $n$-manifold.
The existence of a  smooth, transversely oriented codimension-one foliation
of $X$ implies that the Euler characteristic of $X$ vanishes
(apply the Poincar\'e-Hopf index theorem
to the unit normal vector field to the transversely
oriented foliation with respect to an arbitrarily chosen metric on $X$).
Therefore the necessary implication in Theorem~\ref{main} is clear.
In fact, item~(a) of Theorem~1 in Thurston~\cite{th3} shows that $X$ admits a smooth foliation
if and only if it has vanishing Euler characteristic.
Since such an $X$ with Euler characteristic zero always admits a nowhere zero smooth vector field $V$
(Hopf~\cite{hf2}, also see Theorem 39.7 in Steenrod~\cite{sten1}),
then item (b) of Theorem~1 in~\cite{th3}
assures that the transversely oriented $(n-1)$-plane field on $X$ given
by the orthogonal complement of $V$ with respect
to a previously chosen metric on $X$, is homotopic to the tangent plane of a smooth, codimension-one
foliation. In particular, there exists a smooth, codimension-one, transversely oriented foliation on $X$.

Henceforth, to prove the sufficient implication of Theorem~\ref{main}
we can assume that $X$ admits a smooth, codimension-one,
transversely oriented foliation $\cF$.
We will also fix an auxiliary Riemannian metric $g$ on $X$. Our goal will be
to make a possibly different choice of $\cF$ and $g$
so that $\cF$ is a non-minimal CMC foliation with respect to $g$.

\subsection{Turbularization along an embedded closed transversal.}
\label{sec4.1}
Consider $\esf^1$ to be the quotient $\R/\Z$ with the orientation induced by the usual orientation on
$\R$ and let $\G\colon \esf^1 \to X$ be an embedded smooth curve transverse to $\cF$, which exists by
the following elementary argument. Consider a maximal integral curve $\g $
of the unit normal field $N_{\cF}$ to $\cF $. If $\g$ is closed, then we are done.
If $\g $ never closes, then the compactness of $X$ implies that $\g$ enters twice
(actually, infinitely many times) inside some product coordinate chart
$U=\D \times (0,1)$ of the foliation. Thus, $(X-U)\cap \g$
contains a subarc $\widehat{\g }$, one of whose end points $\widehat{\g }(0)$
lies in $\D \times \{ 0\} $ and the other one $\widehat{\g }(1)$ lies in $\D \times \{ 1\} $.
By basically joining
$\widehat{\g }(0)$ with $\widehat{\g }(1)$ by a `straight line segment' in $U$
and then smoothing the resulting embedded closed curve, we construct the desired embedded
closed transversal $\G \colon \esf^1\to X$ to $\cF $. Finally, after replacing $\G$ by a small perturbation
of its two-sheeted cover, we will assume that a small regular neighborhood of $\G$ is orientable.
We can also suppose that the inner product with respect to
$g$ of the velocity vector field to $\G $ with $N_{\cF}$ is positive.

Given $\sigma\in \{+,-\}$, $\G$ and $\cF$, we next describe a method for modifying
$\cF $ in a small regular neighborhood of $\G $ giving rise to a new foliation $\cF(\G,\sigma)$ of $X$,
by means of a standard technique called {\it turbularization.} Later we will introduce some
metric aspects in the turbularization process that will be useful in our goal to
prove the necessary implication of Theorem~\ref{main}.

First choose $\ve>0$ sufficiently small so that the closed embedded $8\ve$-neighborhood $V(\G ,8\ve )$
of $\G$ is parameterized by a diffeomorphism
\begin{equation}
\label{eq:Phi}
\Phi \colon \overline{\D}(8\ve )\times \esf^1\to V(\G ,8\ve ),
\end{equation}
where $\overline{\D}(r)=
\{ x\in \R^{n-1}\ | \ \| x\| \leq r \}$ for each $r>0$.
We can also take $\Phi $ so that the restricted foliation
$\cF |_{V(\G ,8\ve )}$ of $V(\G ,8\ve )$ consists of
$\{ \Phi (\overline{\D}(8\ve )\times \{\theta \})\ | \
\theta \in \esf^1\} $; in particular, all of these leaves are
$(n-1)$-dimensional disks. Note  that each orbit of
the action of $\esf^1$ on  $V(\G ,8\ve )$,  induced by pushing forward
via $\Phi $ the product action of $\esf^1$ on $ \overline{\D}(8\ve )\times \esf^1$,
intersects each leaf of $\cF |_{V(\G ,8\ve )}$
transversely in a single point. In particular, given $r\in (0,8\ve ]$,
the compact hypersurface
\[
\T(r)=\Phi (\partial \ov{\D }(r)\times \esf^1)
\]
obtained as the orbit of the action of $\esf^1$ on
$\partial \D (r)$, intersects each of the disk-type
leaves of $\cF |_{V(\G ,8\ve )}$  transversely in an embedded $(n-2)$-sphere.
$SO(n-1) \times \esf^1$ acts naturally on
$\overline{\D }(8\ve )\times \esf ^1$ (hence on
$V(\G ,8\ve )$ via $\Phi $) so that each $(A,\theta )\in SO(n-1) \times \esf^1$
acts by rotation by $A$ in $\overline{\D }(\ve )$
and by translation by $\theta $ in $\esf^1$.

Given $\sigma\in \{+,-\}$, let $S_\sigma$ be a connected, complete, non-compact
smooth hypersurface with compact
boundary in $V(\G ,8\ve )$ with the following properties:
\begin{enumerate}[S1.]
\item $S_{\sigma }$ is contained in
$\Phi [(\ov{\D }(8\ve)-\overline{\D }(4\ve))\times \esf^1)$ and it is of
revolution, i.e., if $\Phi (x,\t)\in S_{\sigma }$, then $\Phi (Ax,\t)\in S_{\sigma }$
for any $A\in SO(n-1)$.

\item $S_{\sigma }$ is graphical
(with respect to the orbits of the $\esf^1$-action on the second factor)
over the annulus $\Phi [(\D (6\ve )-\overline{\D }(4\ve))\times \esf^1)$.

\item The intersection of
$S_{\sigma }$ with $\Phi [(\overline{\D }(8\ve)-\overline{\D }(6\ve )\times\esf^1]$
equals the annulus
$\Phi [(\overline{\D}(8\ve )-\overline{\D}(6\ve ))\times \{0\}]$ (recall that
this annulus is part of a leaf of $\cF |_{V(\G ,8\ve )}$).

\item $S_{\sigma }$ is smoothly asymptotic in $V(\G ,8\ve )$ to the hypersurface $\T(4\ve)$.

\item The orientation of $\T(4\ve)$ induced by the
annular graph $S_{\sigma }$ coincides with the inward pointing unit normal to the
solid region $\Phi (\D (4\ve )\times \esf^1)$ when $\sigma=+$, and with
the outward pointing unit normal when $\sigma=-$;
see Fig.~\ref{fig1} for the case of $S_{\sigma}=S_+$.
\end{enumerate}
\begin{figure}
\begin{center}
\includegraphics[height=5.8cm]{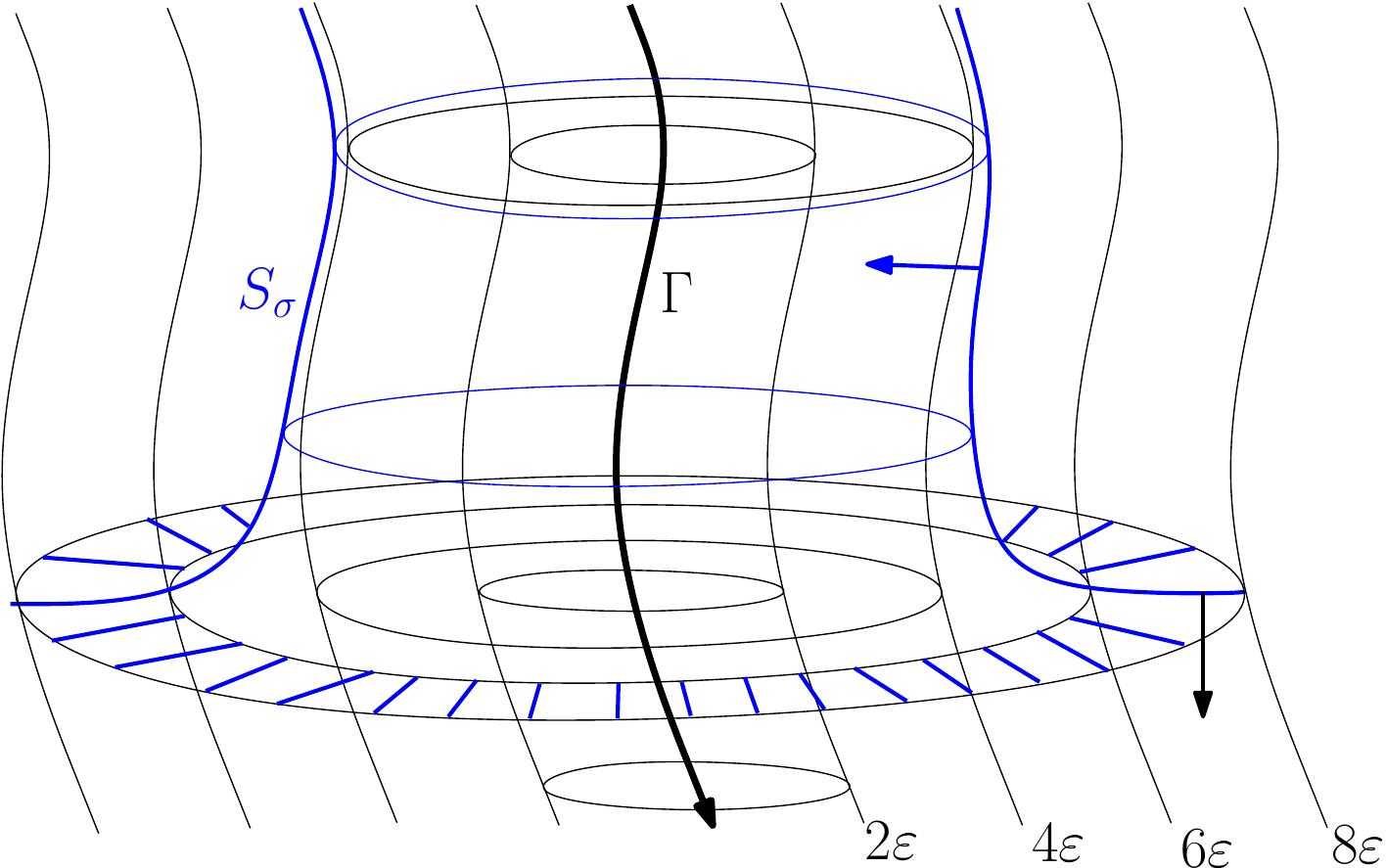} \label{fig1}
\caption{The complete non-compact hypersurface $S_{\sigma }=S_{+}$ in an
$8\ve$-regular neighborhood of a closed transversal~$\G $.
Note that as $\G $ is closed, then $S_{\sigma }$ comes
again into the represented portion of $X$, lying in between
the represented portion of itself and the compact hypersurface
$\Te (4\ve )$, and this happens infinitely many
times as $S_{\sigma }$ wraps infinitely often around $\T(4\ve)$
limiting to this compact hypersurface.}
\end{center}
\end{figure}

We are now in a position to describe $\cF(\G,\sigma)$.
Note that Property S2 above implies that
the family of translates $S_\sigma(\t)$ of $S_\sigma$ by
elements $\t\in\esf^1$ in the second factor
of the $(SO(n-1) \times \esf^1)$-action on $V(\G ,8\ve)$, are all disjoint.
These translates $S_{\sigma }(\t )$, $\t \in \esf^1$, together with the
hypersurfaces $\T(t)$, $t\in (2\ve ,4\ve]$
and with the leaves of $\cF\cap [X-\Phi (\D (6\ve)\times \esf^1)]$
produce a smooth, transversely oriented foliation of
$X- \Phi (\overline{\D }(2\ve)\times \esf^1)$.
In turn, this foliation can be extended to a smooth, transversely oriented
foliation $\cF(\G,\sigma)$ of $X$ by attaching an enlarged
Reeb component $\Omega =\Omega (\G ,\sigma )$ along the boundary $\T (2\ve)$ of
$X- \Phi (\overline{\D }(2\ve)\times \esf^1)$
and so that the $\esf^1$-action on
$\Phi ([\overline{\D }(8\ve)-\overline{\D }(2\ve)]\times \esf^1)$
agrees with the natural translational action of $\esf^1$ on $\Omega $.

\subsection{Constructing the desired foliation of $X$.}
\label{ConstructionF}
Using the above construction of $\cF(\G,\sigma)$ from $\cF$, $\G$ and $\sigma $, we will show
how to obtain a smooth, transversely oriented, codimension-one foliation of $X$ such that
with respect to some Riemannian metric on $X$ (to be defined in Sections~\ref{subsecXorient}
and \ref{subsec4.3}), has leaves of constant mean curvature.
We start with a smooth, transversely oriented, codimension-one foliation $\cF$ on $X$.
After possibly doing turbularization along a closed transversal to $\cF $ (such a closed
transversal was proven to exist at the beginning of Section~\ref{sec4.1}), we can assume that
 $\cF $ contains a non-compact leaf.

\begin{lemma}
\label{lemma4.1}
There exists a finite collection $\Delta =\{\g_1,\ldots,\g_k\}$ of pairwise
disjoint, compact embedded arcs that are transverse to the leaves of $\cF$
and positively oriented with respect to $N_{\cF }$, such that:
\begin{enumerate}
\item Every  compact leaf of $\cF$ intersects at least one of the $\g_i$.
\item For each $i=1,\ldots ,k$, the end points of $\g _i$ lie on non-compact leaves
of $\cF$.
\end{enumerate}
\end{lemma}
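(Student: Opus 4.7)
The plan is to combine Haefliger's theorem~\cite{hae2} that $\cC_\cF$ is a compact subset of $X$ with the arrangement (ensured by turbularization, as noted earlier in this section) that $\cF$ admits at least one non-compact leaf. Together these give that $\Omega:=X\setminus \cC_\cF$ is a non-empty open subset of $X$. I will build the family $\Delta$ by first constructing, for each $p\in\cC_\cF$, a positively oriented compact transverse arc through $p$ with both endpoints in $\Omega$; then passing to a finite subfamily by compactness of $\cC_\cF$; and finally perturbing to achieve pairwise disjointness.

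For the pointwise construction at a fixed $p\in\cC_\cF$, consider the complete flow $\phi_t$ of $N_\cF$ on $X$. If there exist times $t^+>0$ and $t^-<0$ with $\phi_{t^{\pm}}(p)\in \Omega$, take $\g_p$ to be the compact sub-arc of the orbit running from $\phi_{t^-}(p)$ to $\phi_{t^+}(p)$; this arc is embedded (for $|t^\pm|$ small enough, or after cutting off any earliest self-return), transverse to $\cF$, positively oriented with respect to $N_\cF$, and has endpoints in $\Omega$. The delicate case is when, for instance, the forward orbit of $p$ under $\phi_t$ remains entirely inside $\cC_\cF$. In this degenerate situation I would replace $N_\cF$ locally by a perturbed vector field of the form $V=N_\cF+W$ with $W$ smooth and tangent to $\cF$; since $V$ still has positive inner product with $N_\cF$, its integral curves remain transverse to $\cF$ and positively oriented. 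Exploiting the openness and non-emptiness of $\Omega$, the connectedness of $X$, and the freedom in choosing $W$, one can steer an integral curve of $V$ through $p$ into $\Omega$ in finite positive time (and symmetrically in negative time), producing the desired arc $\g_p$.

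The family $\{\mathrm{int}(\g_p)\}_{p\in\cC_\cF}$ is then an open cover of the compact set $\cC_\cF$, so a finite subcover $\g_{p_1},\dots,\g_{p_k}$ exists; every compact leaf of $\cF$ is contained in $\cC_\cF$ and thus meets at least one of these arcs, giving condition~1 of the lemma. Finally, since $n\geq 3$, two generic smooth embedded arcs in $X$ are disjoint, so a sufficiently small perturbation of the collection $\g_{p_1},\dots,\g_{p_k}$ makes its members pairwise disjoint; transversality to $\cF$ and positive orientation with respect to $N_\cF$ are open conditions preserved by small perturbations, and by taking the perturbation small enough the endpoints remain in the open set $\Omega$, giving condition~2. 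The main expected obstacle is the degenerate case in the pointwise construction step, where the flow of $N_\cF$ from $p$ never escapes $\cC_\cF$; constructing the perturbed transverse vector field whose orbit does escape is the key technical point that the plan relies on.
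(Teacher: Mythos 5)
Your approach differs from the paper's. The paper works with the compact metric space $\cF_0$ of compact leaves and, for each $L\in\cF_0$, considers a maximal compact oriented $1$-parameter family $I=\{L_t\mid t\in[0,1]\}$ containing $L$; maximality is then used to argue that the end leaves $L_0,L_1$ must be approached by non-compact leaves, so that a transversal through all of $I$ can be pushed slightly past both ends into $\Omega$. Your plan instead flows directly from each point $p\in\cC_\cF$ along $N_\cF$ until escaping into $\Omega$, and then perturbs for disjointness. The finite-subcover and generic-disjointness steps (the latter valid since $n\geq 3$) are fine; in fact the paper does not spell out disjointness, so that part of your argument is a welcome addition.

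The gap is in your handling of what you call the degenerate case, and it is a real one, not a technicality: your proposed fix is circular. The set of points reachable from $p$ by integral curves of vector fields $V=N_\cF+W$ with $W$ tangent to $\cF$ and $\langle V,N_\cF\rangle>0$ is exactly the set of points that can be reached from $p$ by a positively oriented transversal arc. So saying ``one can steer an integral curve of $V$ through $p$ into $\Omega$'' is precisely the assertion that the lemma needs to establish (that $\Omega$ is forward-reachable from every $p\in\cC_\cF$ through a transversal arc); the appeal to openness of $\Omega$, connectedness of $X$, and ``freedom in choosing $W$'' does not furnish an argument for it. What is missing in your plan is exactly the structural input the paper extracts from Haefliger compactness together with the maximality of the compact family $I$: maximality forces non-compact leaves arbitrarily close to the ends $L_0$ and $L_1$, which rules out a compact leaf being transversally ``trapped'' inside $\cC_\cF$. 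Absent an argument of this kind, the degenerate case is simply left unresolved.
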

\begin{proof}
Let ${\cF }_0$ be the set of compact leaves of ${\cF }$. After picking a metric on $X$,
we can endow ${\cF }_0$ with the structure of a compact metric space with the induced
distance between compact leaves (compactness of ${\cF }_0$ follows from Haefliger~\cite{hae2},
also see Theorem~6.1.1 in~\cite{caco1}).  Given a leaf $L\in {\cF }_0$,
then either $L$ lies in a maximal, compact oriented 1-parameter family
$I=\{ L_t\ | \ t\in [0,1]\} \subset {\cF }_0$ or it fails to have this property.

In the first case,  the leaves $L_0,L_1$ are limits of non-compact leaves of ${\cF }$.
Pick points $p(0),p(1)$ in non-compact leaves of $\cF$ sufficiently
close to $L_0,L_1$ so that there exists a positively oriented transversal arc $\g _L$ joining
$p(0)$ with $p(1)$ and
which intersects exactly once each of the leaves in $I$. If on the contrary,
 $L$ does not lie in any compact oriented 1-parameter family $I$ as before, then a slight
 modification of the above arguments applies to $L$ in order to find an arbitrarily short,
 positively oriented transversal arc $\g _L$ intersecting $L$ exactly once and
with end points $p(0),p(1)$ in non-compact leaves of $\cF$.

Hence we have associated to
each $L\in {\cF }_0$ an open arc $\g _L$ that intersects $L$
with end points in $X-\cC _{\cF}$,
where $\cC _{\cF}=\cup _{L\in {\cF }_0}L$.
For each open transversal arc $\g _L$  as before, consider the set of leaves $A(\g _L)$ in
${\cF }_0$ that intersect $\g _L$. Clearly $A(\g _L)$ is an open set in
${\cF }_0$. As ${\cF }_0$ is compact, then we can extract a finite
open subcover from the family $\{ A(\g _L) \ | \ L\in {\cF }_0\} $, and the lemma follows
by choosing the transversal arcs associated to the finite subcover.
\end{proof}

Our next goal is to modify $\cF$ by turbularization along pairs $\G _1^i,\G_2^i$
of disjoint closed transversals associated to each $\g _i\in \Delta $
with the notation of Lemma~\ref{lemma4.1}. We next explain how to associate
such a pair $\G_1^i,\G_2^i$ to $\g _i$.
 Consider one of the oriented arcs $\g \in \Delta $ appearing in Lemma~\ref{lemma4.1}.
Note that $\g $ intersects $\cC _{\cF}$ in a compact set.  As we travel along $\g$ with its orientation,
 there exists a first point $p(\g )$ in $\g $ that lies in a compact leaf of $\cF$, and a last
 point $q(\g )$ in $\g $ that lies in a compact leaf of $\cF$. We label by $L(p(\g )),
 L(q(\g ))$ the (compact) leaves of $\cF$ passing through $p(\g )$ and $q(\g )$, respectively. Note that

\begin{itemize}
 \item $L(p(\g ))$ has non-trivial holonomy on its side  containing
 the starting point of $\g $.
  \item $L(q(\g ))$ has non-trivial holonomy on its side  containing
 the end point of $\g $.
 \end{itemize}

 A brief explanation of the above expression 'non-trivial holonomy' is in order. We will explain this
 property at $L=L(p(\g ))$; the case at $L(q(\g ))$ is similar. Suppose that
 $\varphi _{\be }\colon \D \times  (0,1)\to U_{\be }$ is a foliation chart
 (we use the notation in Definition~\ref{deflamination}), so that $\varphi _{\be }
 (\D \times \{ 1/2\} )\subset L$ and $\varphi _{\be }(\vec{0},1/2)=p(\g )$, where $\vec{0}$ denotes the
 origin in $\D $. The component $\g _1$ of $\g \cap U_{\be }$ that passes through $p(\g )$ can be
 identified with $\{ \vec{0}\} \times (0,1)$. Given an embedded, smooth transversal arc $\a \subset U_{\be }$
 with end points $\a (0)\in \D \times \{ 0\} $, $\a (1)\in \D \times \{ 1\} $, the local product
 structure of the foliation chart determines a diffeomorphism
 $ h\colon \g _1\to \a $ so that
 \begin{equation}
 \label{hol1}
 \varphi _{\be }^{-1}\left( h(\varphi _{\be }(\vec{0},t))\right) \in \D \times \{ t\} ,\quad \mbox{for all }
 t\in (0,1).
 \end{equation}

If $c\colon [0,1]\to L$ is a path starting at $p(\g )$, then we can cover $c([0,1])$ by
a finite number of foliation charts $(U_{\be _1},\varphi _{\be _1}), \ldots ,
(U_{\be _k},\varphi _{\be _k})$ as before, so that $U_{\be _i}\cap U_{\be _{i+1}}\neq
\mbox{\O }$ for all $i$. Pick embedded, smooth transversal
arcs
\[
\a _1=\g _1\subset U_1,\
\a _2\subset U_{\be _1}\cap U_{\be _2},\ldots ,
\a _{k-1}\subset U_{\be _{k-1}}\cap U_{\be _k},\
\a _k \subset U_{\be _k}
\]
 with  $\a _i(1/2)\in c([0,1])$ for all $i$, $\a _k(1/2)=c(1)$ and end points
$\a _i(0)\in \D \times \{ 0\} $, $\a _i(1)\in \D \times \{ 1\} $.  Then,
 (\ref{hol1}) produces
diffeomorphisms $h_i\colon \a _i\to \a _{i+1}$,
$1=1,\ldots ,k-1$. The composition $h_{k-1}\circ \ldots \circ h_1$ is a diffeomorphism
from the germ of $\a _1$ at $c(0)=p(\g )$ to the germ of $\a _k$ at $c(1)$, called
{\it holonomy transport.} It is a well-known fact that the holonomy transport only
depends on the relative homotopy of $c$. In the particular case that $c$ is a
loop based at $p(\g )$, the homotopy invariance of the holonomy transport
gives rise to a homomorphism
\[
\mbox{Hol}\colon \pi _1(L,p(\g ))\to G(\g )
\]
from the fundamental group of $L$ based at $p(\g )$ to the group of germs of
diffeomorphisms of the transversal arc $\g $ at $p(\g )$. The `non-trivial holonomy'
mentioned above means that the homomorphism Hol is not constant equal to the
identity element in $G(\g )$.

As $p(\g )$ is the first point in $\g$ that lies in a compact leaf of $\cF$, then
there exists a closed embedded curve $c_1\subset L(p(\g))$ whose homotopy
class $[c_1]\in \pi _1(L,p(\g ))$ produces a nontrivial diffeomorphism Hol$([c_1])
\in G(\g )$; more specifically, there exists a left normal fence
above $c_1$  and non-compact leaves of $\cF$ limiting to $L(p(\g ))$ on the local
side of $L(p(\g ))$ that contains the initial  point
of $\g $. In fact, after a small perturbation of $\g$ we
may assume that this normal fence is
chosen to contain the subarc $\g |_{p(\g )}^0$ of $\g $
between the beginning point of $\g $ and $p(\g )$
(after possibly shortening $\g $).
A standard argument allows us to find a positively oriented
closed transversal $\G _{p(\g )} $ to $\cF$
that lies in the left normal fence, intersects
$\g |_{p(\g )}^0$ at a single point and is topologically
parallel to $c_1$, see Fig.~\ref{closedtransversal}.
\begin{figure}
\begin{center}
\includegraphics[height=4.5cm]{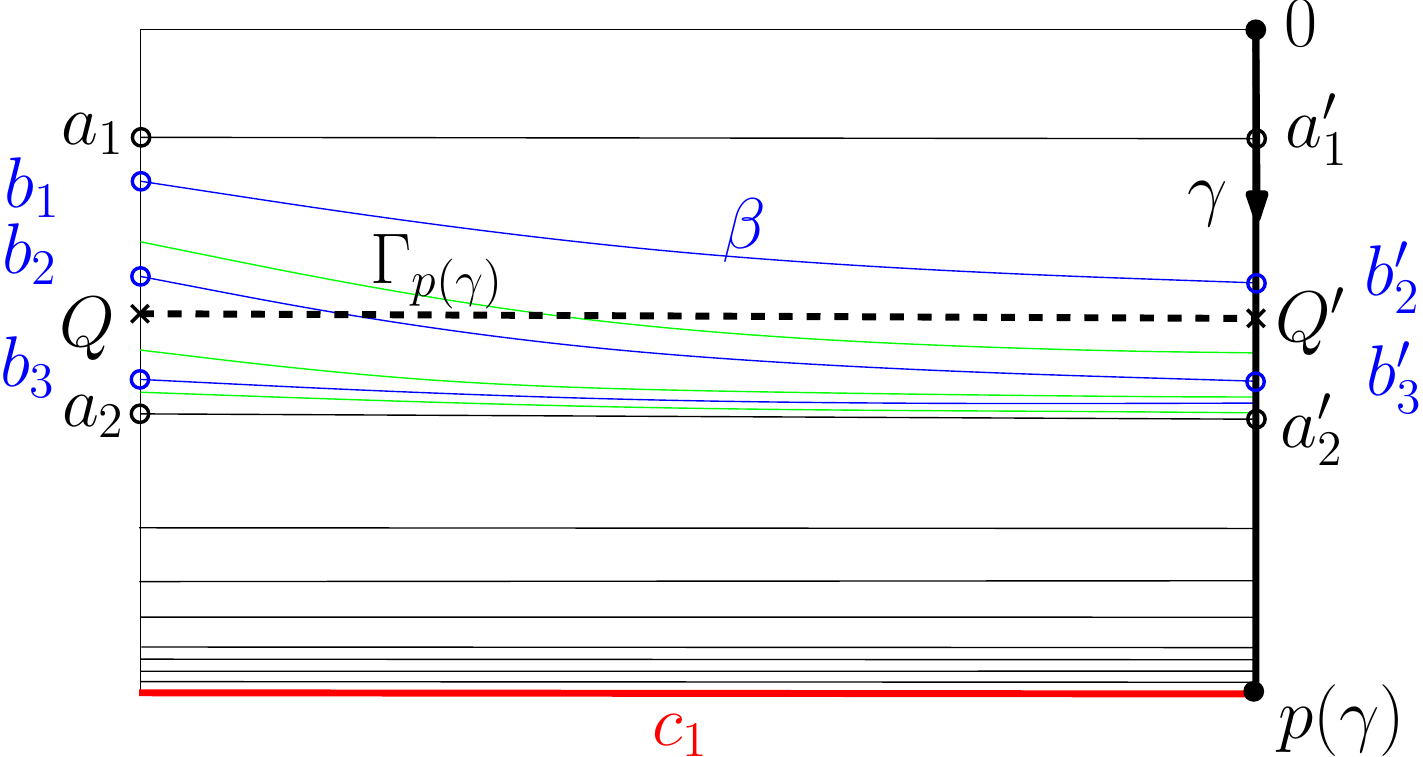}
\caption{A left normal annular fence $A$ above $c_1$ (identify
the two ``vertical sides'' of $A$ by horizontal translation)
transversely intersects infinitely many leaves
of $\cF$ different from $L(p(\g ))$, all of them non-compact. The non-trivial holonomy of
$c_1$ in $A$ implies that at least one of the intersection curves of $A$ with
leaves of $\cF $ is a non-compact
embedded spiraling curve $\beta $ (the points $b_j,b_j'\in \beta $
identify for each $j\in \N$) that limits in $A$ to some closed intersection
curve (with end points $a_2,a_2'$ in the picture) which could be $c_1$.
Spiraling intersection curves near $c_1$ are then grouped
into open annular components in $A$, each of which
is bounded by closed intersection curves; the closed transversal $\G _{p(\g )} $
can be then constructed by transversely crossing every
spiraling intersection curve in a given annular component;
note that the two end points of the above representation
of $\G _{p(\g )} $ identify to a single point $Q=Q'$.}
\label{closedtransversal}
\end{center}
\end{figure}

Let $\cF(\G _{p(\g )} ,-)$ be the smooth foliation of $X$ constructed by
turbularization around $\G_{p(\g )}$ as we explained in Section~\ref{sec4.1}; in particular,
$\cF (\G _{p(\g )},-)$ contains an enlarged Reeb component $\Omega (\G _{p(\g )},-)
\subset \Phi (\ov{\D }(3\ve )\times \esf ^1)$ (here we are using the notation in
(\ref{eq:Phi})) and $\cF(\G _{p(\g )},-)$ coincides with the previous foliation $\cF $
outside $\Phi (\ov{\D }(6\ve )\times \esf^1)$.

We next do a similar construction on the ``future'' side of $q(\g)$, i.e.,
 there exists a closed embedded curve $c_2\subset L(q(\g))$, a right normal fence
above $c_2$ containing the subarc $\g|_1^{q(\g )}$ of $\g $ between $q(\g )$
and the ending point of $\g$, and a positively oriented closed transversal
$\G _{q(\g )}$ to $\cF$ that lies in the normal fence, intersects
$\g|_1^{q(\g )}$ at a single point and is topologically parallel to $c_2$,
and then we do turbularization around $\G _{q(\g )}$ by constructing a
new smooth foliation $\cF(\G _{q(\g )} ,+)$ of $X$
that contains an enlarged Reeb component $\Omega (\G _{q(\g )},+)$ and such
that $\cF(\G _{q(\g )},+)$ coincides with the previous foliation $\cF(\G _{p(\g )},-)$
outside an embedded neighborhood of $\G _{q(\g )}$. Note that $\cF(\G _{q(\g )},+)$
also contains the enlarged Reeb component $\Omega (\G _{p(\g )},-)$ and that both
enlarged Reeb components $\Omega (\G _{p(\g )},-)$, $\Omega (\G _{q(\g )},+)$
can be assumed to be disjoint.

Finally, we repeat the above process for each transversal arc $\g _i\in \Delta $
appearing in Lemma~\ref{lemma4.1}, increasing the number of pairwise disjoint
enlarged Reeb components (two for each $\g _i$) until producing a smooth foliation $\cF'$ that contains
$2k$ enlarged Reeb components $\Omega (\G _{p(\g _i)},-)\subset
\Phi ^i_2(\overline{\D }(3\ve)\times \esf^1)$,
$\Omega (\G _{q(\g _i)},+)\subset
\Phi ^i_1(\overline{\D }(3\ve)\times \esf^1)$, $i=1,\ldots ,k$,
where $\G _1^i=\G _{q(\g )}$, $\G _2^i=\G _{p(\g )}$ for each
each open transversal arc $\g =\g _i\in \Delta $,
and the $\Phi ^i_j\colon \ov{\D }(8\ve )\times \esf^1\to V(\G ^i_j,8\ve )$
parameterize embedded tubular neighborhoods of the closed transversals $\G ^i_j$
(clearly we can assume that $\ve >0$ is common to all these tubular neighborhoods),
see Fig.~\ref{figfences}.
\begin{figure}
\begin{center}
\includegraphics[height=9cm]{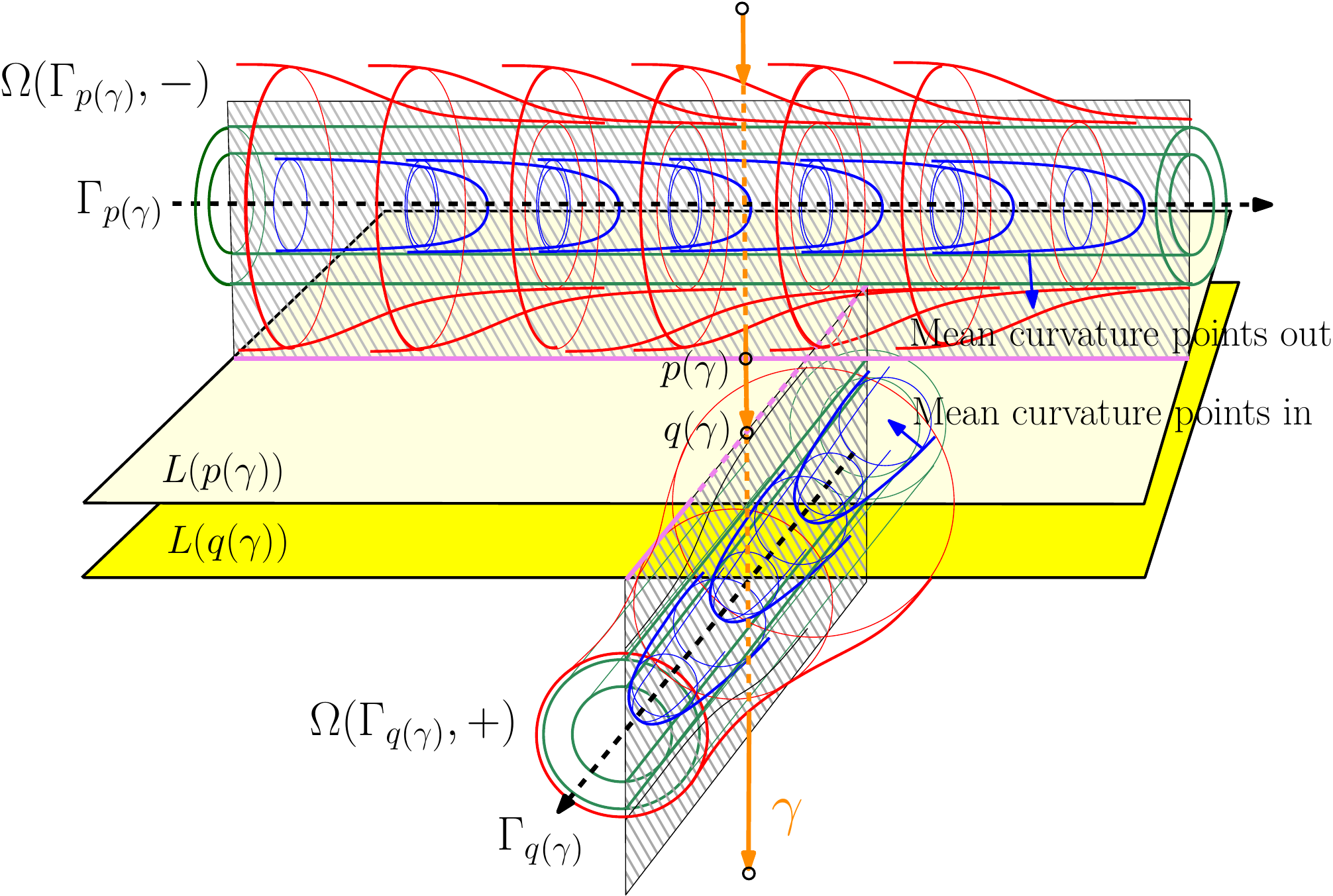}
\caption{The enlarged Reeb components $\Omega (\G_{q(\g )} ,+)$, $\Omega (\G_{p(\g )} ,-)$ crossing
each of the open transversal arcs $\g \in \Delta $.}
\label{figfences}
\end{center}
\end{figure}

\subsection{Producing the desired metric on $X$: the case $X$ is orientable.}
\label{subsecXorient}
The final step in our construction is to produce a smooth Riemannian metric on $X$ that makes
 $\cF'$ a CMC foliation. To do so we will first analyze the case that $X$ is orientable,
in which the argument is simpler and based on Oshikiri's condition (Theorem~\ref{thm2.4}).
In next section we will give a different proof based on Sullivan's arguments in~\cite{sul1}
which does not require either orientability for $X$ or Oshikiri's result.

Consider the smooth function $f$ on $X$ which equals the mean curvature function of the enlarged
Reeb foliations $\Omega (\G_{p(\g )} ,-)$, $\Omega (\G_{q(\g )} ,+)$ of $\cF'$
and is extended by zero to the complement of the union of these enlarged Reeb
components. We now check that $f$ satisfies the conditions of Theorem~\ref{thm2.4} with respect
to the foliation $\cF'$.  First observe that, with the notation introduced at the end of
Section~\ref{ConstructionF}, the portion $R(\G ^i_j,\pm )$ of each of the enlarged
Reeb components $\Omega (\G^i_j,\pm )$ given by the closure of their simply-connected
leaves (this is, after unwrapping $\G^i_j$, $R(\G ^i_j,\pm )$ lifts
to $\overline{\D}(r_1)\times \R $ with the notation of
item~(E1) just before Definition~\ref{rem3.1}), satisfies the following properties:
\begin{enumerate}[(R1)]
\item $R(\G^i_j,\pm )$ is a Reeb component of $\cF'$ (see item~3 of Definition~\ref{def3.1}).
\item $R(\G^i_j,\pm )$ is a minimal $(+)$ or $(-)$-fcd.
\item $f$ has points inside $R(\G^i_j,\pm )$ with the same sign as the character of this Reeb component.
\end{enumerate}

We now verify that there are no other minimal $(+)$ or $(-)$-fcd in $\cF'$.
Consider a smooth, foliated compact domain $D$ of $\cF'$ which is minimal under inclusion,
and which is different from the above Reeb components $R(\G^i_j,\pm)$ of $\cF'$.
We claim that $D$ contains a boundary
component where the transversal orientation points inward and another
 boundary component where the transversal orientation points outward (this would imply
 that there are no minimal $(+)$ or $(-)$-fcd in $\cF'$ different from the Reeb components
 $R(\G^i_j,\pm)$, which in turn implies that Oshikiri's condition holds for $f$ and $\cF'$).
 By Lemma~\ref{lemma4.1}, there exists a positively oriented transversal arc
$\g _i\in \Delta $ that intersects transversely a boundary component $\partial _1$ of $D$
 at some point and by construction, $\g _i$ contains a subarc $s$ that enters the
Reeb component $R(\G ^i_1,+)$ near the ending point of $\g_i$ and exits its related ``opposite''
Reeb component $R(\G ^i_2,-)$ near the initial point of $\g_i$.
 As $s$ also intersects $\partial _1$ at some point $s(t_0)$ and the end points of
$s$ lie outside of $D$, then $s\cap D$ must contain a further subarc $s'$ with one of
its boundary points being  $s(t_0)$ and its other end point is $s(t_1)$ which lies
in a second boundary component of $D$ along which $N_\cF$ is oppositely pointing
 with respect to $D$. Thus our claim in proved.

 Finally, when $X$ is orientable then Theorem~\ref{thm2.4} implies that there exists
 a smooth Riemannian metric $g_1$ on $X$ whose
 mean curvature function is $f$, thereby proving the
 sufficient implication of Theorem~\ref{main} in this special case for $X$.

\subsection{Producing the desired metric on $X$: the general case.}
 \label{subsec4.3}
We next prove existence of a metric $g_1$ on $X$ that makes $\cF'$ a CMC foliation,
and which coincides up to scalings with the
previously constructed smooth metrics from Section~\ref{sec3} in the finitely many
enlarged Reeb components of $\cF$
of the form $\Omega(\G_{p(\g )} ,-)$, $\Omega(\G_{q(\g )} ,+)$
with the notation at the end of Section~\ref{ConstructionF}. This proof does not
use Oshikiri's result and also works in the case $X$ is non-orientable.

The key idea is, roughly speaking, to remove part of the enlarged Reeb components
from $\cF'$ (so that a neighborhood of the resulting boundary is still a product
foliation), glue appropriately the resulting boundary leaves in pairs
to obtain a new compact $n$-manifold $\widehat{X}$
without boundary, where $\cF'$  induces a smooth codimension-one foliation $\widehat{\cF}$;
then we will construct a smooth Riemannian metric $\widehat{g}$ on
$\widehat{X}$ that makes $\widehat{\cF}$ minimal (by application of Sullivan's
Theorem possibly generalized to the non-orientable case, which will be proved in
Section~\ref{sec4.5}).
Finally, after pulling back $\widehat{g}$ to the complement of the
Reeb components and gluing this pulled back metric with the
ones constructed in Section~\ref{sec3} on the
enlarged Reeb components  (appropriately scaled by positive constants),
we will obtain the desired smooth Riemannian metric.
We now give details on this construction.

Recall that with the notation of Section~\ref{ConstructionF},
we have two closed, positively oriented
transversals $\G _1^i=\G _{q(\g )}$, $\G _2^i=\G _{p(\g )}$
associated to each open transversal arc $\g =\g _i\in \Delta $.
With the notation in Section~\ref{sec4.1}, we also have related coordinates
$\Phi ^i_j(\overline{\D }(8\ve)\times \esf^1)$
for the replaced tubular neighborhoods of the $\G_j^i$, $j=1,2$.
%
%
%
%

We consider the manifold
$\wh{X}$ obtained as the quotient manifold
\[
 \wh{X}=\left[X-\bigcup _{i=1}^k\left(\Phi^i_1
 (\overline{\D }(3\ve )\times \esf^1\right) \cup
\Phi^i_2 \left(\overline{\D }(3\ve )\times \esf^1\right) \right]/\sim,
\]
where $\sim$ is the equivalence relation induced by the map
$\Phi ^i_1(x,\t )\in \Phi ^i_1(\partial \D (3\ve )
\times \esf^1)\mapsto \Phi ^i_2(x,-\t)\in
\partial \Phi ^i_2(\D (3\ve )\times \esf^1)$.
It is straightforward to check:
\ben[(S1)]
\item $\wh{X}$ is orientable when $X$ is orientable.
\item The transversely oriented smooth foliation $\cF'$ on $X$ induces a transversely
oriented smooth foliation $\wh{\cF}$ of $\wh{X}$.
\item Every compact leaf of $\wh{\cF}$ is induced by a compact leaf of $\cF'$
and has a closed transversal passing through it
(namely, the quotient by the equivalence relation $\sim $
of a suitable subarc of one of the $\g _i\in \Delta $; note that for this to
make sense, we possibly need to adjust the $\t$-variable in one of the
two local `cylindrical' coordinates $(x,\t )$ defined by (\ref{eq:Phi})
so that the intersection of $\g _i$ with
$\Phi^i_1 [\overline{\D }(3\ve )\times \esf^1]$
and $\Phi^i_2 [\overline{\D }(3\ve )\times \esf^1]$
has coordinates $(x_0,0)$ in both local systems).
\item The foliation $\wh{\cF}$ is homologically taut, and hence Sullivan's
Theorem (or rather, its generalization Theorem~\ref{thm-sul} below)
implies that $\wh{X}$ admits a smooth
Riemannian metric $g_{\wh{X}}$  such that all of the leaves of the foliation
$\wh{\cF}$ are minimal.
\een

We next consider the $n$-manifold with boundary
\begin{equation}
\label{wtX}
 \wt{X}=X-\bigcup _{i=1}^{k}\left[\Phi^i_1 (\D (3\ve )\times \esf^1\right) \cup
\Phi^i_2 \left(\D (3\ve )\times \esf^1)\right] .
\end{equation}

We next describe how to glue the pulled back Riemannian metric $\wt{g}$ and the
pulled back minimal foliation $\wt{\cF}$ on $\wt{X}$, obtained respectively from $g_{\wh{X}}$,
$\wh{\cF}$ on $\wh{X}$, to the enlarged Reeb type components.

In the proof of the next assertion we will use some ideas of the proof of the main theorem
  in~\cite{sul1}, more specifically,
the paragraph in the proof of this result where Sullivan shows that
{\it homological tautness implies
geometrical tautness.} To do this, he first uses the homological tautness of
an oriented foliation $\wh{\cF}$ of a compact manifold $\wh{X}$,
to produce via the Hahn-Banach theorem a closed $(n-1)$-form
$\omega $ on $\wh{X}$ whose restriction to the leaves of $\wh{\cF}$ is positive.
Then he considers the pointwise linear map
$P_{\omega }\colon T\wh{X}\to T\wh{\cF}$ given by
\begin{equation}
\label{eq:purif}
P_{\omega }(v)\llcorner (\omega |_{\wh{F}})=(v\llcorner \omega )|_{\wh{F}}, \quad v\in T_{\wh{x}}\wh{X},
\end{equation}
where $T\wh{X}$ denotes that tangent bundle to $\wt{X}$, $T\wh{\cF}$ is the subbundle of $T\wh{X}$ tangent
to the leaves of $\wh{\cF}$, $\wh{F}$ is the hyperplane tangent
to $\wh{\cF}$ at a point $\wh{x}\in \wh{X}$ and $\llcorner $ means contraction. In particular,
$P_w(v)=v$ for all $v\in T\wh{\cF}$, hence $P_{\omega }$ is surjective.
Next he defines the {\it purification} of $\omega $ as the $(n-1)$-form on $\wh{X}$ given by
$P_{\omega }^*(\omega |_{T\wh{\cF}})$
(in our codimension-one case, $P_{\omega }^*(\omega |_{T\wh{\cF}})$ coincides with $\omega $).
Finally, Sullivan constructs the ambient smooth metric on $\wh{X}$
for which $\wh{\cF}$ is minimal by simply taking the
orthogonal direct sum
\begin{equation}
\label{eq:dec}
g_{\wh{X}}=g^1\oplus g^2
\end{equation}
of any metric $g^1$ on the $1$-dimensional
distribution $\{ \mbox{kernel}(P_\omega )\} $ with any metric $g^2$
on the subbundle $T\wh{\cF}$ whose $(n-1)$-volume form coincides with
$\omega |_{\wh{\cF}}$. In our proof of Assertion~\ref{asser:} below,
we will choose the metrics $g^1,g^2$ appearing in (\ref{eq:dec}) appropriately.

The next assertion uses the rotationally symmetric metric $ds^2$ defined by equation~(\ref{rot0})
for the function $\phi =\phi (r)$ given by (\ref{eq:rot6}) and extended to $r\in [0,1]$ as explained in
Remark~\ref{rem3.3} and in the paragraph just before this remark. Recall that by Remark~\ref{rem3.3},
given $\l >0$, the metric $ds^2+dz^2$ on $\ov{\D}(1)\times (\R /\l \Z )$ restricts to a product metric on
$[\ov{\D}(1)-\D (r_2)]\times (\R /\l \Z )$. In the sequel it will be useful
to write this product Riemannian manifold in a different manner. Consider the diffeomorphism
\[
\chi \colon [\ov{\D}(1)-\D (r_2)]\times (\R /\l \Z )\to \left( \esf^{n-2}(r_2)\times (\R /\l \Z )\right) \times [r_2,1],
\quad
\chi (r a,b)=(r_2a,b,r),
\]
where $a\in \esf^{n-2}(1)$ and $r\in [r_2,1]$. Denote by $g_\mu$ the product
metric on $\esf^{n-2}(r_2)\times (\R /\l \Z )$
that makes $\chi $ an isometry from $ds^2+dz^2$ to $g_{\mu }\times dr^2$.
Note that the function $\phi $ in (\ref{eq:rot6}) depends on the choice of $H>0$,
but this dependence does not affect the metric
$\wt{g}$ defined just after~(\ref{wtX}).
\begin{assertion}
\label{asser:}
There exist numbers $\l _1,\ldots ,\l _k>0$
such that $\wt{g}$ can be chosen so that
the each of the domains $\left( \Phi ^i_j([\ov{\D }(4\ve )-\D (3\ve)]\times \esf^1),
\wt{g}\right) $ is isometric to the Riemannian product
\[
\left( \left( \esf^{n-2}(r_2)\times (\R /\l _i\Z )\right) \times [r_2,1],
 g_{\mu }\times dr^2\right) ,
 \]
by a diffeomorphism $\psi \colon \Phi ^i_j([\ov{\D }(4\ve )-\D (3\ve)]\times \esf^1)\to
\left( \esf^{n-2}(r_2)\times (\R /\l _i\Z )\right) \times [r_2,1]$ such that the pullback by $\psi $
of the product foliation $\{ [\esf^{n-2}(r_2)\times (\R /\l _i\Z )]\times \{ \de \} \ | \ \de\in[r_2,1]\} $ is
the restriction of $\wt{\cF}$ to $\Phi ^i_j([\ov{\D }(4\ve )-\D (3\ve)]\times \esf^1)$.
\end{assertion}
\begin{proof}
We first prove the assertion under the additional hypothesis that $\cF'$ is orientable,
which, since it is transversely oriented,  would hold if $X$ were orientable. As $\cF'$
is homologically taut, then the arguments of Sullivan explained before the statement
of the assertion assure that there exists a Riemannian metric $g_{\wh{X}}$
on $\wh{X}$ that makes $\cF'$ a minimal foliation, and $g_{\wh{X}}$ has the
structure given in (\ref{eq:dec}).
Since $\wt{g}$ is the pulled back metric
of $g_{\wh{X}}$, then the decomposition (\ref{eq:dec}) can be written as well for $\wt{g}$.
Our purpose is to choose the metrics $g^1,g^2$ in (\ref{eq:dec}) (with $\wt{g}$ in the left-hand-side)
so that the assertion holds.

Recall that the linear map $P_{\omega }\colon T\wh{X}\to T\wh{F}$ defined by (\ref{eq:purif})
is surjective. Clearly $P_{\omega }$ lifts to a related surjective linear map from
$T\wt{X}$ to $T\wt{\cF}$, also denoted by $P_{\omega }$.
Consider a nowhere zero smooth vector field $V$ on
$\Phi _j^i[(\ov{\D }(4\ve )-\D (3\ve ))\times \esf^1]$ that generates the distribution $\{
\mbox{kernel}(P_{\omega })\} $. Note that $V$ is $g_{\wh{X}}$-orthogonal to the leaves of
$\wh{\cF}$ (we can consider $\Phi _j^i[(\ov{\D }(4\ve )-\D (3\ve ))\times \esf^1]$ to be
a subset of $(\wh{X},g_{\wh{X}})$ or of $(\wt{X},\wt{g})$, and in both cases we obtain
isometric compact domains with the corresponding induced metric).
After multiplying $V$ by a positive smooth function defined on
$\Phi _j^i[(\ov{\D }(4\ve )-\D (3\ve ))\times \esf^1]$, we can assume that
\begin{equation}
\label{eq:9}
\varphi _t\left[ \T ^i_j(3\ve )\right] =\T ^i_j(3\ve +t),\quad  \mbox{for any }t\in [0,\ve ],
\end{equation}
where $\{ \varphi _t\} _t$ denotes the local 1-parameter group of diffeomorphisms generated by $V$
and $\T ^i_j(3\ve +t)=\Phi _j^i(\partial \ov{\D }(3\ve +t)\times \esf^1)$, for each $t\in [0,\ve ]$, $i=1,\ldots ,k$,
$j=1,2$. Then we can identify $V=\frac{\partial }{\partial t}$. After changing the metric $g^1$,
we can also assume that $\frac{\partial }{\partial t}$ has $g^1$-length equal to $(1-r_2)/\ve $
in $\Phi _j^i[(\ov{\D }(4\ve )-\D (3\ve ))\times \esf^1]$.

Two geometrical consequences of this change of metric in the $g_{\wh{X}}$-orthogonal direction to $T\wh{\cF}$
are that the distance between $\T ^i_j(3\ve )$ and $\T ^i_j(3\ve +t)$ depends linearly on $t\in [0,\ve ]$ and
that the integral curves of $\frac{\partial }{\partial t}$ are geodesics of $g_{\wh{X}}$.
Also note that (\ref{eq:9}) implies that we can define natural ``coordinates'' $(x,t)$ in
$\Phi _j^i[(\ov{\D }(4\ve )-\D (3\ve ))\times \esf^1]$ so that $x\in \T ^i_j(3\ve )$, $t\in [0,\ve ]$,
and we can consider the projection
\[
\Pi \colon \T ^i_j(3\ve )\times [0,\ve ]\to \T ^i_j(3\ve ),\quad \Pi (x,t)=x.
\]

Once the above change of $g^1$ is done, we will change the ``tangential part'' $g^2$ of $g_{\wh{X}}$
appropriately. Consider the induced metric $g_{3\ve }$ by $g_{\wh{X}}$
on the compact hypersurface $\T ^i_j(3\ve )$.
By item (B) of Remark~\ref{rem3.3}, we can pick a positive number $\l _i$ so that
\begin{equation}
\label{Moser1}
\mbox{Vol}(\esf^{n-2}(r_2)\times (\R /\l _i\Z ),g _{\mu })=\mbox{Vol}(\T ^i_j(3\ve ),g_{3\ve }).
\end{equation}
By (\ref{Moser1}) and the first theorem in Moser~\cite{moser1}, there exists a diffeomorphism
$\xi \colon \T^i_j(3\ve )\to \esf^{n-2}(r_2)\times (\R /\l _i\Z )$
such that
\begin{equation}
\label{eq:varphi'}
\xi ^*dV_{n-1}=\omega |_{\T^i_j(3\ve )},
\end{equation}
where $dV_{n-1}$ is the $(n-1)$-volume form associated to the product metric $g_{\mu }$
defined just before Assertion~\ref{asser:}.

Now consider the smooth 1-parameter family of $(n-1)$-forms
$\{ \alpha _t\ | \ t\in [0,\ve ]\} $ on $\esf^{n-2}(r_2)\times (\R /\l _i\Z )$ defined by
\begin{equation}
\label{eq:varphi}
\left[ (\xi \circ \Pi)|_{\T ^i_j(3\ve )\times \{ t\} }\right] ^*\a _t=\omega |_{\T ^i_j(3\ve +t)}, \quad t\in [0,\ve ].
\end{equation}
Note that (\ref{eq:varphi'}) and (\ref{eq:varphi}) imply that $\a _0=dV_{n-1}$.
Since $\omega $ is a closed $(n-1)$-form, then Stokes' theorem gives that for all $t\in [0,\ve ]$,
\[
0=\int _{\Phi ^i_j[(\overline{\D}(3\ve +t)-\D (3\ve )]}d\omega =
\int _{\T ^i_j(3\ve +t)}\omega -\int _{\T ^i_j(3\ve )}\omega
\]
\[
\stackrel{(\ref{eq:varphi'}),(\ref{eq:varphi})}{=}
\int _{\esf^{n-2}(r_2)\times (\R /\l _i\Z )}\a _t-\int _{\esf^{n-2}(r_2)\times (\R /\l _i\Z )}dV_{n-1}.
\]

By Theorem 2 in~\cite{moser1}, there exists a smooth 1-parameter family of diffeomorphisms
$\phi _t\colon \esf^{n-2}(r_2)\times (\R /\l _i\Z )\to \esf^{n-2}(r_2)\times (\R /\l _i\Z )$, $t\in [0,\ve ]$,
such that $\phi _0$ is the
identity and
\begin{equation}
\label{eq:phit}
\phi _t^*dV_{n-1}=\a _t,\quad \mbox{ for each }t\in [0,\ve ].
\end{equation}
Consider the diffeomorphism
\[
\psi \colon \T ^i_j(3\ve )\times [0,\ve ]\to \left( \esf^{n-2}(r_2)\times (\R /\l _i\Z )\right) \times [r_2,1],
\qquad \psi (x,t)=\left( \phi _t(\xi (x)),{\textstyle \frac{1-r_2}{\ve }}t+r_2\right) .
\]
Calling $r=\frac{1-r_2}{\ve }t+r_2$ to the second variable in the target of $\psi $,
the vector field $\frac{\partial }{\partial r}$ on
$\left( \esf^{n-2}(r_2)\times (\R /\l _i\Z )\right) \times [r_2,1]$
is orthogonal to the product foliation
$\{ [\esf^{n-2}(r_2)\times (\R /\l _i\Z )]\times \{ \de \} \ | \ \de\in[r_2,1]\} $
with respect to the metric $g_{\mu } \times dr^2$. Therefore, the pullback
vector field $\psi ^*(\frac{\partial }{\partial r})=(\psi ^{-1})_*(\frac{\partial }{\partial r})$
is $\psi ^*(g_{\mu }\times dr^2)$-orthogonal to the pulled back foliation
\[
\{
\psi ^{-1}\left( [\esf^{n-2}(r_2)\times (\R /\l _i\Z )]\times \{ \de \} \right) \ | \ \de\in[r_2,1]\}
=\{ \T ^i_j(3\ve )\times \{ t\} \ | \ t\in [0,\ve ]\} .
\]

We claim that the $(n-1)$-volume form of the restriction of the Riemannian metric
$\psi ^*(g_{\mu }\times dr^2)$ to the leaves of $\wh{\cF}$ 
agrees with $\omega |_{\wh{\cF}}$; 
to see this, take a $g_{3\ve +t}$-orthonormal basis $\{ e_2,
\ldots ,e_n\} $ tangent to the leaf of $\wh{\cF}$
passing through a point in $\T ^i_j(3\ve +t)$, $t\in [0,\ve ]$, positively oriented. Then,
\[
1=\left( \omega |_{\T ^i_j(3\ve +t)}\right) (e_2,\ldots ,e_n)\stackrel{(\ref{eq:varphi})}{=}
\a _t\left( (\xi \circ \Pi )_*(e_2),\ldots ,(\xi \circ \Pi )_*(e_n)\right)
\]
\[
\stackrel{(\ref{eq:phit})}{=}dV_{n-1}\left( (\phi _t\circ \xi \circ \Pi )_*(e_2),\ldots ,(\phi _t\circ \xi \circ \Pi )_*(e_n)\right) ,
\]
from where our claim
follows directly as $dV_{n-1}$ is the volume form of $(\esf^{n-2}(r_2)\times (\R /\l _i\Z ),g_{\mu })$
and the first component of $\psi $ is $\phi _t\circ \xi \circ \Pi $ (the variable $t$ is fixed as we are doing the computation
tangentially to the leaves of $\wh{\cF}$).

So, after identification by $\psi$,
Sullivan's construction can be performed with the metrics $g^1=dr^2$  and $g^2=g_{\mu }$ in
$\Phi ^i_j([\ov{\D }(4\ve )-\D (3\ve)]\times \esf^1)$ (rigorously, we should write
$g^1=\psi ^*(dr^2)$ and $g^2=\psi ^*(g_{\mu })$ on $\T ^i_j(3\ve )\times [0,\ve ]$),
and smoothly extended to the remainder of $\wt{X}$,
which finishes the proof of the assertion when $\cF'$ is orientable.

If $\cF'$ is not orientable, then the above arguments can be generalized
in a straightforward manner,
using the covering space techniques applied in the proof of
Theorem~\ref{thm-sul} below, to construct the desired
minimal metric on $\wt{X}$.
\end{proof}
\par
\vspace{.2cm}
To finish the proof of the existence of the ambient metric $g_1$
that makes $\cF'$ a CMC foliation, simply define $g_1$
to be equal to $\wt{g}$ on $\wt{X}$ and equal to the
$(SO(n-1)\times (\R/\l _i\Z))$-invariant metric
defined in Section~\ref{sec4.1} in each of the domains
$\Phi _j^i(\ov{\D }(3\ve )\times \esf^1)$, $i=1,\ldots ,k$,
$j=1,2$, where $\l _i>0$ is defined in Assertion~\ref{asser:}.
We remark that the non-zero constant values $H^i_j$
of the mean curvatures in the Reeb foliations of
$\Phi _j^i(\ov{\D }(2\ve )\times \esf^1)$ ($H^i_j<0$ for $j=1$ and $H^i_j>0$ for $j=2$)
can be chosen arbitrarily,
see Remark~\ref{rem3.3}(B).

\subsection{Sullivan's theorem for non-orientable foliations.}
\label{sec4.5}

Recall that the main theorem in Sullivan~\cite{sul1} asserts that an orientable foliation
(of any codimension) of a compact $n$-manifold is geometrically taut if and only if it is homologically taut.
We will next extend the implication `homologically taut $\Rightarrow $ geometrically taut'
in Sullivan's result by dropping the orientability assumption in the case of codimension one
(this extension was used in property (S4) of Section~\ref{subsec4.3}).
Note that no assumption on transverse orientation is made in the foliation of the next result.

Suppose that $\cF $ is an oriented, smooth, $k$-dimensional foliation of a Riemannian
$n$-dimensional manifold $(X,g)$ (not necessarily $k=n-1$),
and we denote respectively by $dV_{\cF}$, $\Pi \colon TX\to T\cF$
the volume form of the induced metric by $g$ on the leaves of $\cF $ and the $g$-orthogonal projection
of $TX$ onto $T\cF$.  In the  proof of the next theorem,
we will make use of the so called {\it Rummler's calculation.} This calculation
shows that  the smooth $k$-form $\omega $ on $X$ given by
\[
\omega _x(u_1,\ldots ,u_k)=dV_{\cF}(\Pi (u_1),\ldots ,\Pi (u_k)),\quad
x\in X, \ u_1,\ldots ,u_k\in T_xX,
\]
satisfies that it is $\cF$-closed (i.e., the restriction of $\omega $
to every $(k+1)$-dimensional submanifold of $X$ tangent to $\cF$ is closed)
if and only if the leaves of $\cF $ are minimal submanifolds (i.e., $\cF$ is
geometrically taut). In particular,
if $k=n-1$, geometrically tautness of $\cF$ is equivalent to the fact that
$\omega $ is a closed $(n-1)$-form on $X$.
For details about Rummler's calculation, see Lemma~10.5.6 in~\cite{caco1}.

\begin{theorem} \label{thm-sul}
If a smooth codimension-one foliation of a closed $n$-manifold is
homologically taut, then it is geometrically taut.
\end{theorem}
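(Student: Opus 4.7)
The plan is to reduce to Sullivan's original orientable version by passing to the suitable two-fold cover. If $T\cF$ is already orientable, Sullivan's theorem applies directly. Otherwise, let $\pi\colon \widetilde{X}\to X$ be the two-fold covering associated to the orientation line bundle of $T\cF$, and let $\widetilde{\cF}=\pi^{-1}(\cF)$; then $\widetilde{X}$ is again a closed smooth $n$-manifold, $\widetilde{\cF}$ is a smooth, oriented, codimension-one foliation, and the non-trivial deck involution $\tau\colon \widetilde{X}\to \widetilde{X}$ is a smooth diffeomorphism that preserves the leaves of $\widetilde{\cF}$ setwise but reverses their orientations.

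The first step is to lift homological tautness. Given a compact leaf $\widetilde{L}\subset \widetilde{\cF}$, the image $L=\pi(\widetilde{L})$ is a compact leaf of $\cF$, so by hypothesis there exists a closed transversal $\gamma$ of $\cF$ meeting $L$. Since $\pi$ is a local diffeomorphism, each connected component of $\pi^{-1}(\gamma)$ is a closed transversal of $\widetilde{\cF}$; at least one such component meets $\pi^{-1}(L)=\widetilde{L}\cup \tau(\widetilde{L})$, and after applying $\tau$ if necessary this component meets $\widetilde{L}$. Hence $\widetilde{\cF}$ is homologically taut.

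The main step is to carry out Sullivan's Hahn--Banach and purification construction $\tau$-equivariantly. Applied to $(\widetilde{X},\widetilde{\cF})$, Sullivan's argument yields a closed $(n-1)$-form $\widetilde{\omega}$ on $\widetilde{X}$ whose restriction to every oriented leaf is positive. Since $\tau$ reverses the orientation of the leaves, $\tau^{\ast}\widetilde{\omega}$ is closed and negative on oriented leaves, so
\[
\widehat{\omega}:=\tfrac12\bigl(\widetilde{\omega}-\tau^{\ast}\widetilde{\omega}\bigr)
\]
is a closed $(n-1)$-form on $\widetilde{X}$ that is positive on oriented leaves and satisfies $\tau^{\ast}\widehat{\omega}=-\widehat{\omega}$. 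The purification map $P_{\widehat{\omega}}$ defined by~(\ref{eq:purif}) is then compatible with $\tau_{\ast}$ up to sign, so both the tangent subbundle $T\widetilde{\cF}$ and the one-dimensional kernel $\ker P_{\widehat{\omega}}$ are $\tau$-invariant.

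On each of these two $\tau$-invariant subbundles I would choose a $\tau$-invariant auxiliary metric (which exists by averaging any metric under $\{\mathrm{id},\tau\}$); then I would multiply the leaf metric $g^{2}$ by a conformal factor to force its $(n-1)$-volume form to equal $\widehat{\omega}|_{T\widetilde{\cF}}$. The required conformal factor is $\tau$-invariant because the ratio of two $\tau$-anti-invariant nowhere-zero $(n-1)$-forms on $T\widetilde{\cF}$ is a $\tau$-invariant smooth function. By Rummler's calculation, the smooth Rummler form of the Riemannian metric $\widetilde{g}=g^{1}\oplus g^{2}$ coincides with $\widehat{\omega}$; since $\widehat{\omega}$ is closed, every leaf of $\widetilde{\cF}$ is minimal in $(\widetilde{X},\widetilde{g})$. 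Finally, $\widetilde{g}$ is $\tau$-invariant, hence descends to a smooth metric $g$ on $X$ for which $\pi$ is a local isometry, and minimality of $\widetilde{\cF}$ transfers to minimality of $\cF$. The main obstacle is precisely this equivariance: naively averaging Sullivan's metric under $\tau$ would not preserve minimality, so the construction has to be done with an equivariant form $\widehat{\omega}$ from the outset and with $\tau$-invariant choices of $g^{1}$ and $g^{2}$.
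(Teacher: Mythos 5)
Your proof is correct and follows the same overall strategy as the paper's: pass to the two-sheeted cover $\pi\colon \wt X\to X$ orienting $T\cF$, lift homological tautness, use Sullivan's Hahn--Banach argument to get a closed $(n-1)$-form positive on leaves, anti-symmetrize it by the deck involution to obtain a closed form $\wh\omega$ with $\tau^*\wh\omega=-\wh\omega$ still positive on oriented leaves, observe that $\ker P_{\wh\omega}$ is $\tau$-invariant, build a $\tau$-invariant metric whose Rummler form is $\wh\omega$, and descend. In particular, your lifting-of-transversals step and the anti-invariant form $\wh\omega$ match the paper exactly.

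Where you differ, and where your write-up is actually tighter, is the last step. The paper sets $\wt g = g'+\sigma^*g'$ for $g'$ a Sullivan metric adapted to $\wh\omega$ and then invokes Rummler. But while $g'|_{T\wt\cF}$ and $(\sigma^*g')|_{T\wt\cF}$ each have $(n-1)$-volume form equal to $\wh\omega|_{T\wt\cF}$, their \emph{sum} generically does not (the determinant of a sum of positive-definite forms with equal determinants strictly exceeds the expected value unless they coincide), so the Rummler form of $g'+\sigma^*g'$ is only a positive multiple $\lambda\,\wh\omega$ of $\wh\omega$, and closedness of $\lambda\,\wh\omega$ is not automatic. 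Your proposal sidesteps this by choosing $g^1$ and $g^2$ to be $\tau$-invariant from the start: average on the line bundle $\ker P_{\wh\omega}$, then average on $T\wt\cF$ and conformally rescale by the (automatically $\tau$-invariant) ratio of volume forms so that $dV_{g^2}=\wh\omega|_{T\wt\cF}$ exactly. That is the precise way to carry out Sullivan's construction equivariantly, and it is surely what the paper intends; your remark about the ``main obstacle'' identifies exactly the point the paper treats too briefly.
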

\begin{proof}
Let $\cF$ be a codimension-one, homologically taut foliation of a closed $n$-manifold $X$.
If $\cF$ can be oriented,
then Sullivan's Theorem implies that it is geometrically taut.
Suppose now that $\cF$ cannot be oriented. As orientability of $\cF $ is equivalent to
orientability of the tangent subbundle $T\cF$ of $TX$, then after passing to a
two-sheeted cover $\Pi\colon \wt{X}\to X$ of $X$, we can assume that the pulled back
foliation $\wt{\cF} $ of $\wt{X}$ by the projection $\Pi$ is orientable; this process of
passing to a two-sheeted cover does not imply that $\wt{X}$ is orientable.
Let $\sigma\colon \wt{X}\to \wt{X}$ be the order-two
covering transformation.  Also fix an orientation for the leaves of $\wt{\cF}$.

Note that since $\cF$ is homologically taut then
$\wt{\cF}$ is also homologically taut. By the proof of Sullivan's Theorem,
there exists a closed $(n-1)$-form $\omega$
on $\wt{X}$ that is positive on the oriented tangent spaces to the leaves of $\wt{\cF}$.  Note that
the pulled back $(n-1)$-form  $\sigma^*{\omega}$ is negative on
the oriented tangent spaces to the leaves of $\wt{\cF}$ and so,
the form $\wt{\omega }=\omega-\sigma^*(\omega)$ is positive on the tangent spaces
to the leaves of $\wt{\cF}$. Clearly, $\wt{\omega }$ is also closed on $\wt{X}$.
  By the proof of Sullivan's Theorem applied to $\wt{\omega }$,
there exists a metric
${g}'$ on $\wt{X}$ such that the $\wt{\omega}$ restricts to be the
$(n-1)$-volume form on the leaves of $\wt{\cF}$.
Let $\wt{g}={g}'+\sigma^*{g'}$ be the related $\sigma$-invariant metric on $\wt{X}$ and let $g$
be the corresponding quotient metric on $X$. Since $\sigma^*\wt{\omega}=-\wt{\omega}$, it follows that
the $\wt{g}$-isometry $\sigma$  leaves invariant the kernel of the projection
$P_{\wt{\omega}}$, defined as in~(\ref{eq:purif}).
In this setting, Rummler's calculation implies that $\wt{\cF}$ is a minimal foliation
with respect to $\wt{g}$; thus, under quotient, the foliation $\cF$ is minimal with respect to $g$,
as desired.
\end{proof}

\begin{remark}{\em
With straightforward modifications, the proof of the Theorem~\ref{thm-sul} generalizes to the case of
any smooth, $k$-dimensional foliation of a closed $n$-manifold, not just in
the case that $k=n-1$. The only difference in the proof is that after obtaining the
$k$-form $\wt{\omega}$, one replaces it by its purified
form $\wt{\omega}'=P_{\wt{\omega }}^*(\wt{\omega }|_{T\wt{\cF}})$
as described in page 208 of~\cite{sul1}, which continues to be
relatively $\cF $-closed and to satisfy $\sigma^*\wt{\omega}'=-\wt{\omega}'$.
}\end{remark}

\section{The proof of  the Structure Theorem~\ref{main2}.}
 \label{sec5}
This section is devoted to the proof of  Theorem~\ref{main2}.
 Let $(X,g)$ be a closed, connected   Riemannian $n$-manifold which
admits a non-minimal CMC foliation $\cF$.
During the proof, the reader should keep in
 mind that since $\cF$ is assumed to
 be  transversely oriented by a unit vector field $N_{\cF}$ orthogonal to the
leaves of $\cF$, then every  compact leaf $L$ of $\cF$ is two-sided
and has a closed regular neighborhood diffeomorphic to $L\times [0,1]$.
The set $\cF _0$ of compact leaves of $\cF$ is sequentially compact
in the sense that any sequence of compact leaves of $\cF$
has a subsequence that converges smoothly with multiplicity one to a compact leaf of $\cF$
(Haefliger~\cite{hae2}). In particular, the union $\cC_\cF$ of the compact leaves of
$\cF$ is a compact subset of $X$.

The divergence of $N_{\cF}$ is equal to $-(n-1)H_{\cF}$, where
we are using the notation just before the statement of Theorem~\ref{main2}. Therefore,
by  the divergence theorem, one obtains the well-known formula
 $$
 \int_X H_{\cF} \;dV=0,
 $$
which completes the proof of the first item in  Theorem~\ref{main2}.

We next prove item~\ref{it2} of the theorem. Since the foliation $\cF$ is smooth, then
$H_\cF\colon X \to \R$ is smooth as well.
By Sard's theorem and the compactness of $X$, the subset of regular values of $H_{\cF}$
is an open subset of the interval $(\min H_{\cF},\max H_{\cF})\subset \R $
whose complement has measure zero
in $[\min H_{\cF},\max H_{\cF}]$. By the implicit
 function theorem and the compactness of
$X$, for each regular value $H$ of $H_\cF$, $H_{\cF}^{-1}(H)$ consists of
a finite number of compact leaves of $\cF$ contained in $\Int(\cC_\cF)$.
This completes the proof of item~\ref{it2}.

We now proceed with the proof of item~\ref{it3}.
Since manifolds are second countable and $\cC_\cF$ is a compact subset of $X$,
then $X-\cC_\cF$ has a countable number of
components, all of which are open.
It follows that if
$H_\cF$ restricted to a component $\Delta$ of  $X-\cC_\cF$ were not constant, then by Sard's
Theorem there would be a regular value $H_0$ of $H_\cF$  different from any
of the finite number of values of  $H_\cF$ on the finite
number of compact boundary components of $\Delta$.
By item~\ref{it2} of this theorem, $\Delta \cap (H_\cF)^{-1}(H_0)$
contains a compact leaf of $\cF$;
this is a contradiction since $\Delta$ lies in $X-\cC_\cF$.
Thus, $H_{\cF}$ is constant in every component of
$X-\cC_\cF$, from which the first part of the first sentence in item~\ref{it3}
of the theorem follows by the continuity
of $H_{\cF}$.
In particular, except for the countable subset of $H_\cF(X)$ corresponding to the
set of values of
$H_\cF$ on the components of $X-\cC_\cF$, every leaf of $\cF$ with mean
curvature different from one of these special values is compact.  Furthermore,
if $H_\Delta \in \R$ is the value of
$H_\cF$ on a component $\Delta$ of $X-\cC_\cF$, then
by the continuity of $H_\cF$, $H_\Delta$
is the value of the mean curvature of any compact leaf in the
boundary of  $\Delta$.
Thus, for every $H\in H_\cF(X)$ there exists
at least one compact leaf of $\cF$ of  mean curvature $H$.

To complete the proof of item~\ref{it3}, it remains to demonstrate
that every leaf in the closure of $X-\cC_\cF$ is stable.
To do this, first consider a component $\Delta$ of  $X-\cC_\cF$.
We have already proved that the leaves
in the closure of $\Delta$ have the same constant mean curvature $H$. Since the closure of
 $\Delta$ in $X$ has the structure of an $H$-lamination of $X$ where every leaf
 is a limit leaf, then the main theorem in~\cite{mpr18}
 implies that every leaf in the closure of $\Delta$ is a stable $H$-hypersurface.
Suppose that $L$ is a leaf in the closure
of $X-\cC_\cF$ which is not a leaf in the closure of any component of $X-\cC_\cF$;
in this case, every point $x\in L$ is the limit in $X$ of a sequence of
points $x_n\in X$, each of  which lies in the boundary
$\partial \Delta _n$ of a component $\Delta _n$ of $X-\cC_\cF$.
In this case,
Haefliger's compactness result for the set of closed leaves of $\cF$ implies that
$L$ is compact and it is the smooth limit (with multiplicity one)
of a sequence of compact $H_n$-stable leaves $L_n\subset \partial \Delta _n$.
By the continuity of $H_{\cF}$, the $H_n$ converge to the mean curvature of $L$.
Since for $n$ large any smooth unstable subdomain in $L$
can be lifted normally to a smooth
 unstable subdomain in $L_n$, the instability of $L$ would contradict
the assumption that the $L_n$ are  $H_n$-stable leaves.
 Therefore, $L$ must also be stable, which completes the proof of item~\ref{it3} of the theorem.

To prove item~\ref{it4}, suppose that $L$ is a leaf of $\cF$. The local product
structure of a foliation clearly implies that the set $A=\{ p\in L \ | \ (\nabla H_{\cF})(p)=0\} $
is open in $L$. Since $A$ is clearly closed in $L$ and $L$ is connected, then $A=L$ or
$A=\mbox{\O }$. In particular, if $L$ contains a regular point of $H_{\cF}$ then $A=L$
and so, $L$ consists entirely of regular points of $H_{\cF}$.
By item~\ref{it3}, the closure of  $X-\cC_\cF$ consists entirely of
critical points of $H_\cF$.  Since every non-compact leaf of $\cF$
is contained in $X-\cC_\cF$, then we conclude that $L$ must
be compact. As the set of regular points
of $H_{\cF}$ is open, then the same arguments show that $L$
lies in the interior of ${\cC}_{\cF}$ and that
$\nabla H_\cF$ is non-zero in a neighborhood $U(L)$ of $L$ in $X$.
By an application of the implicit function theorem
to $H_\cF$, $U(L)$ can be taken so that $\cF $ restricts to
$U(L)$ as a product foliation of compact leaves diffeomorphic to $L$. This product foliation
can be considered to be a smooth normal variation $L_t$ of $L$
through compact leaves of $\cF$, whose variational field is $V=fN_{\cF}$
for $f=g(\left. \frac{d}{dt}\right| _{t=0}L_t,N_{\cF})$ and with
\[
Jf=(n-1)\left. \frac{d}{dt}\right| _{t=0}H(L_t)\neq 0
\]
everywhere on $L$, where $J=\Delta +\| \sigma \| ^2+\mbox{Ric}(N_{\cF})$ is the Jacobi
operator of $L$ (here $\| \sigma \| ^2$ denotes the square of the norm of the second fundamental
form of $L$ and Ric the ambient Ricci curvature).
The foliation property of the variation $t\mapsto L_t$
insures that $f$ has constant sign on $L$, say $f>0$. If $Jf<0$ on $L$, then Lemma~2.1
in~\cite{mpr19} insures that $L$ is stable (in fact, with nullity zero). Conversely, if
$Jf>0$ on $L$, then the index form $Q(f,f)=-\int _LfJf$ is strictly negative, and so
$L$ is unstable. Thus, item~4a of Theorem~\ref{main2} holds.

Suppose now that $L$ is a leaf of $\cF$ that is disjoint from the regular points of $H_\cF$.
One possibility for $L$ is that it is contained in the closure of $X-\cC_\cF$, in which case
 item~\ref{it3} implies $L$  is stable, and so, the
index of $L$ is zero.   Otherwise $L$ lies in the interior of $\cC_\cF $,
which means that $L$ is a limit leaf of
the CMC lamination of $X$ consisting of compact leaves of $\cF$.
As $L$ lies in a 1-parameter family $\{L_t\}$
of compact leaves  whose mean curvatures $H_t$ at $L$ agree up
to first order with the mean curvature of $L$ (since $L$ is a subset of
critical points of $H_\cF$),  the normal variational vector field $F$ to the variation
at the leaf $L$
is a nowhere zero Jacobi field  on $L$. It follows in this case that
$L$ is stable and has nullity one.  This completes the proof of item~\ref{it4}b.

Finally we prove item~\ref{it5}.
Suppose that $L$ is a leaf of $\cF$ with mean curvature in
$\{ \min H_\cF,  \max H_\cF \}$.  Then $L$ is stable
by item~\ref{it4}b. Next consider a
sequence of regular values $r_n$ of $H_\cF$ that
are tending  to, but smaller than, the value $\max H_\cF$.
By item~\ref{it2}, there exist compact leaves $L_n$ of $\cF$
with $H_\cF(L_n)=r_n$, and by compactness of $\cC_\cF$,
after replacing by a subsequence, the $L_n$ converge to a compact
leaf $L$ with mean curvature  $\max H_\cF$.  Since $L$ is
disjoint from the regular values of $H_\cF$ by item~4a, then the last part of item~\ref{it4}b
implies that the nullity of $L$ is one. A similar argument
proves the existence of a compact stable leaf $L'$
of nullity one and mean curvature $\min H_\cF$, which
completes the proof of Theorem~\ref{main2}. \vspace{.3cm}

\center{William H. Meeks, III at profmeeks@gmail.com\\
Mathematics Department, University of Massachusetts, Amherst, MA 01003}
\center{Joaqu\'\i n P\'{e}rez at jperez@ugr.es\\ 
Department of Geometry and Topology, University of Granada, Granada, Spain}

\bibliographystyle{plain}
\bibliography{bill}
\end{document}